\title{Median eigenvalues and the HOMO-LUMO index of graphs}
\author{Bojan Mohar\thanks{Supported in part by the
  Research Grant P1--0297 of ARRS (Slovenia), by an NSERC Discovery Grant (Canada)
  and by the Canada Research Chair program.}~\thanks{On leave from:
  IMFM \& FMF, Department of Mathematics, University of Ljubljana, Ljubljana,
  Slovenia.}\\
  {Department of Mathematics}\\
  {Simon Fraser University}\\
  {Burnaby, B.C. V5A 1S6} \\
  email: {\tt mohar@sfu.ca}
}
\newtheorem{theorem}{Theorem}[section]
\newtheorem{lemma}[theorem]{Lemma}
\newtheorem{conjecture}[theorem]{Conjecture}
\newcommand{\DEF}[1]{{\em #1\/}}
\renewcommand\l{\lambda}
\begin{document}

\maketitle

%%%%%%%%%%%%%%%%%%%%%%%%%%%%%%%%%%%%%%%%%%
\begin{abstract}
Motivated by the problem about HOMO-LUMO separation that arises in mathematical chemistry, Fowler and Pisanski \cite{FP1,FP2} introduced the notion of the HL-index which measures how large in absolute value may be the median eigenvalues of a graph. In this note we provide rather tight lower and upper bounds on the maximum value of the HL-index among all graphs with given average degree. In particular, we determine the exact value of this parameter when restricted to chemically relevant graphs, i.e.\ graphs of maximum degree $3$, and thus answer a question from \cite{FP1,FP2,FJP}.
The proof provides additional insight about eigenvalue distribution of large subcubic graphs.
\end{abstract}
%%%%%%%%%%%%%%%%%%%%%%%%%%%%%%%%%%%%%%%%%%

\section{Introduction}

In a recent work, Fowler and Pisanski \cite{FP1,FP2} (see also Jakli\v c et al.\ \cite{FJP}) introduced the notion of the \emph{HL-index} of a graph that is related to the HOMO-LUMO separation studied in theoretical chemistry. This is the gap between the Highest Occupied Molecular Orbital (HOMO) and Lowest Unoccupied Molecular Orbital (LUMO). The energies of these orbitals are in linear relation with eigenvalues of the corresponding molecular graph and can be expressed as follows. Let $G$ be a (molecular) graph of order $n$, and let
$\lambda_1\ge \lambda_2\ge \cdots \ge\lambda_n$ be the eigenvalues of its adjacency matrix. The eigenvalues occurring in the HOMO-LUMO separation are $\lambda_H$ and $\lambda_L$, where
$$
   H=\lfloor \tfrac{n+1}{2}\rfloor \qquad \textrm{and} \qquad
   L=\lceil \tfrac{n+1}{2}\rceil.
$$
The \emph{HL-index\/} $R(G)$ of the graph $G$ is then defined as
$$R(G) = \max \{|\lambda_H|,|\lambda_L|\}.$$

In \cite{FP1,FP2} it is proved that for every ``chemical'' graph, i.e., a graph $G$ of maximum degree at most 3, we have $0\le R(G) \le 3$ and that if $G$ is bipartite, then $R(G)\le \sqrt{3}$. Let $R(3) := \sup \{R(G)\}$, where the supremum is taken over all graphs $G$ whose maximum degree $\Delta(G)$ is at most 3. It is proved in \cite{FJP} that $\sqrt{2} \le R(3) \le 3$.

We can define a similar quantity $R(d)$ for every positive real value $d$:
$$R(d) = \sup \{R(G)\mid \Delta(G)\le d\}.$$
Similarly, let
$$\widehat R(d) = \sup \{R(G)\mid \bar d(G)\le d\}$$
where $\bar d(G)=|E(G)|/|V(G)|$ denotes the average degree of $G$.
Note that $\widehat R(d) \ge R(d)$ for every $d$.
In this paper we provide lower and upper bounds on the values $R(d)$ and $\widehat R(d)$ for every $d\ge0$ (Theorem \ref{thm:average degree}, and we determine $R(3)$ exactly (Theorem \ref{thm:cubic}). The proof, which is based on interlacing, has some consequences of additional interest (see Section \ref{sect:large}) about eigenvalues of large subcubic graphs.

\section{The general case}
\label{sect:2}

\begin{theorem}
\label{thm:average degree}
For every\/ $d>0$ we have $\widehat R(d) \le \sqrt{d}$. If $d$ is an integer that can be written as $p^k+1$, where $p$ is a prime number and $k\ge 1$ is an integer, then $R(d)\ge \sqrt{d-1}$.
\end{theorem}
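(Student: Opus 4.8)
The plan is to prove the two assertions separately, since they have quite different flavors. For the upper bound $\widehat R(d) \le \sqrt{d}$, I would start from the observation that the sum of the squares of all eigenvalues of $G$ equals the trace of $A(G)^2$, which is $2|E(G)| = 2\bar d(G)\,|V(G)| \le 2d\,n$. Now the key point is to count how many eigenvalues can be large in absolute value. Since the eigenvalues come in the sorted order $\lambda_1 \ge \cdots \ge \lambda_n$, the median eigenvalues $\lambda_H$ and $\lambda_L$ sit right in the middle. If $|\lambda_H| \ge t$, then $\lambda_1,\dots,\lambda_H$ are all $\ge t$ or there is a corresponding bunch of eigenvalues $\le -t$ among $\lambda_L,\dots,\lambda_n$; in either case at least $\lceil n/2 \rceil$ of the eigenvalues have absolute value at least $|\lambda_H|$. (The same holds with $\lambda_L$.) Hence, writing $R = R(G)$, we get $nR^2/2 \le \sum_i \lambda_i^2 \le 2dn$, wait---I need to be careful with the constant. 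Let me instead phrase it as: at least $n/2$ eigenvalues satisfy $\lambda_i^2 \ge R^2$, so $\tfrac{n}{2}R^2 \le \sum_i \lambda_i^2 = 2|E(G)| \le 2dn$, giving $R^2 \le 4d$; to recover the sharper bound $R \le \sqrt d$ one must be more careful and note that among $\lambda_1,\dots,\lambda_n$ roughly half lie on each side, so actually the set of eigenvalues with square at least $R^2$ together with their ``mirror'' contributes, and a cleaner accounting (e.g. pairing $\lambda_H$ with eigenvalues of the opposite sign, or using that the positive and negative parts of the spectrum each carry at least half the ``mass'' near the median) yields $\tfrac{n}{2}\cdot 2R^2 \le 2|E(G)|$ hmm. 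I would reconcile this by working with both $\lambda_H^2$ and $\lambda_L^2$ simultaneously: at least $n$ of the $2n$ ``slots'' across the two medians force $\lambda^2 \ge R^2$ with multiplicity, and the honest inequality is that the number of $i$ with $|\lambda_i| \ge R$ is at least $n - \#\{i : |\lambda_i| < R\}$; since fewer than... — the right statement is simply that at least $\lceil n/2\rceil + \lceil n/2 \rceil - n \ge 0$, so I'll present it as: all eigenvalues with index $\le H$ that are positive, plus all with index $\ge L$ that are negative, number at least... Concretely: either $\lambda_H \ge R$, in which case $\lambda_1,\dots,\lambda_H \ge R$ and $\sum_{i\le H}\lambda_i^2 \ge HR^2 \ge \tfrac{n}{2}R^2$; or $\lambda_H < 0$, forcing $\lambda_L \le \lambda_H < 0$ and $|\lambda_L| = R$, whence $\lambda_L,\dots,\lambda_n$ are all $\le -R$... this only gives $\tfrac{n}{2}R^2 \le 2dn$, i.e. $R \le 2\sqrt d$. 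To get $\sqrt d$ I believe one uses that the bound $\sum \lambda_i^2 = 2|E|$ overcounts: actually the correct refinement is that BOTH the top half and the bottom half of the spectrum each contain an eigenvalue of absolute value $\ge R$ only at the median, so the honest count is that there are at least $n$ indices $i$ with $\lambda_i^2 \ge R^2$ — which is false in general. I will therefore present the argument as giving $R \le 2\sqrt{d}$ unless a sharper pairing works, and flag below that the intended constant is $\sqrt d$; the natural fix is to observe that $\#\{i : \lambda_i \ge R\} \ge H$ OR $\#\{i : \lambda_i \le -R\} \ge n - L + 1 = H$, and in the extremal near-Ramanujan examples the spectrum is symmetric so both happen, doubling the count. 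I would write the clean half-argument and note that combining with the eigenvalue symmetry available in the lower-bound construction closes the gap, or simply cite that the factor is absorbed because we may assume $\lambda_H$ and $\lambda_L$ have opposite signs (else $0$ lies strictly between consecutive eigenvalues and $R$ is tiny), so half the spectrum on each side has modulus $\ge R$, giving $nR^2 \le 2dn$. That last line is, I think, the correct and complete argument: if $\lambda_H$ and $\lambda_L$ have the same sign then $0$ is not between them and one checks $R$ can't be the max; generically assume $\lambda_H \ge 0 \ge \lambda_L$, then $\lambda_1,\dots,\lambda_H \ge \lambda_H$ gives $H$ eigenvalues $\ge R$ and $\lambda_L,\dots,\lambda_n \le \lambda_L$ gives $n-L+1 = H$ eigenvalues $\le -R$, for $2H \ge n$ eigenvalues total with square $\ge R^2$, hence $nR^2 \le \sum\lambda_i^2 = 2|E| \le 2dn$ and $R \le \sqrt{2d}$ — still off by $\sqrt2$. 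I will take this as far as it honestly goes and rely on the author's refinement for the final constant.

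For the lower bound $R(d) \ge \sqrt{d-1}$ when $d = q+1$ with $q = p^k$ a prime power, the plan is to exhibit $d$-regular graphs whose median eigenvalue is close to $\sqrt{d-1} = \sqrt q$, the Ramanujan bound. First I would recall that for any $q$-regular... no, $d$-regular graph that is a good expander, the nontrivial eigenvalues cluster near the interval $[-2\sqrt{d-1}, 2\sqrt{d-1}]$ (Alon–Boppana), but that alone does not pin the median. The sharper tool is the spectral distribution: for a random $d$-regular graph, or for an explicit Ramanujan family, the empirical spectral distribution converges to the Kesten–McKay law on $[-2\sqrt{d-1},2\sqrt{d-1}]$, whose density is symmetric about $0$ and bounded, so the median of the Kesten–McKay distribution is $0$ — that would give median eigenvalue near $0$, the wrong direction! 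So the construction must be different: I want graphs where a large block of the spectrum sits near $\pm\sqrt{d-1}$. The right idea is to use a \emph{bipartite} incidence-type construction — a generalized polygon or, most cleanly, the bipartite double or the point–line incidence graph of a projective plane $PG(2,q)$, which is $(q+1)$-regular on $2(q^2+q+1)$ vertices with eigenvalues exactly $\pm(q+1)$, $\pm\sqrt q$ (each with multiplicity $q^2+q$), so the median eigenvalue is exactly $\sqrt q = \sqrt{d-1}$. More generally, for $d = q+1$ I would take the incidence graph of $PG(2,q)$ (a known strongly-regular-type bipartite graph, the ``biplane''/generalized triangle), verify the eigenvalues $\{\pm(q+1), \pm\sqrt q\}$ with the stated multiplicities from the fact that its adjacency matrix $A$ satisfies $A^2 = qI + J$ on each side, and read off that both median eigenvalues equal $\sqrt q$ in absolute value, so $R(G) = \sqrt{q} = \sqrt{d-1}$, hence $R(d) \ge \sqrt{d-1}$. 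If one wants exact equality rather than a supremum, noting $\widehat R(d) \le \sqrt d$ from the first part shows these incidence graphs are within a multiplicative $\sqrt{d/(d-1)}$ of optimal.

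The main obstacle I anticipate is \textbf{not} the upper bound, which is a short trace/counting argument once the sign issue above is handled correctly, but rather producing an explicit infinite family (for each admissible $d$) of $d$-regular graphs whose \emph{exact} median eigenvalue is $\sqrt{d-1}$ with enough multiplicity that the median lands there — the incidence graph of $PG(2,q)$ does exactly this because its spectrum has only four distinct values with the two ``small'' ones carrying overwhelming multiplicity $2(q^2+q)$ versus $2$ for the ``large'' ones, so $\lambda_H = \lambda_L = \sqrt q$ on the nose. Verifying the spectrum of that incidence graph (equivalently, that $AA^\top = qI + J$ because two distinct points lie on exactly one common line and two lines meet in exactly one point) is the one genuinely substantive computation; everything else is bookkeeping. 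I would also remark that the hypothesis ``$d - 1$ is a prime power'' enters only through the existence of $PG(2,q)$, i.e. of a projective plane of order $q$, which is exactly where number theory is needed.
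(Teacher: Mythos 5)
The lower-bound half of your proposal is exactly the paper's construction: the point--line incidence graph of $PG(2,q)$, $(q+1)$-regular with spectrum $\{\pm(q+1),\ \pm\sqrt q\}$ and the $\pm\sqrt q$ eigenvalues carrying all but four of the multiplicities, so $R(G)=\sqrt{q}=\sqrt{d-1}$. That part is correct and complete.

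The upper bound, however, has a genuine gap, and you essentially admit it: every counting scheme you try bottoms out at $R\le\sqrt{2d}$ (or worse), and your attempts to close the factor of $\sqrt2$ by claiming that \emph{both} halves of the spectrum consist of eigenvalues of modulus at least $R$ are simply false in general --- if $R=\lambda_H>0$ it can happen that $\lambda_L=0$ and no eigenvalue is $\le -R$, so the ``mirror'' half contributes nothing under your accounting. The idea you are missing is the trace identity $\sum_i\lambda_i=\operatorname{tr}(A)=0$, which you never invoke. It guarantees that the negative eigenvalues, whatever their individual sizes, have total absolute value at least $\sum_{i\le H}\lambda_i\ge H\,R\ge \tfrac n2 R$. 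Since there are at most $n-H\le n/2$ of them, Cauchy--Schwarz gives
$$\sum_{\lambda_i<0}\lambda_i^2\ \ge\ \frac{1}{n/2}\Bigl(\tfrac n2 R\Bigr)^{2}\ =\ \tfrac n2 R^2,$$
and adding this to the $\tfrac n2R^2$ contributed by $\lambda_1,\dots,\lambda_H$ yields $nR^2\le\sum_i\lambda_i^2\le nd$, i.e.\ $R\le\sqrt d$. In other words, the second half of the ``mass'' does not come from eigenvalues individually exceeding $R$ in modulus (your repeated, and correctly abandoned, attempts), but from the fact that the opposite-signed part of the spectrum must balance the sum and therefore cannot have small $\ell_2$-norm either. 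Without this step your argument proves only $\widehat R(d)\le\sqrt{2d}$, which is strictly weaker than the claimed statement.
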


\begin{proof}
To prove the lower bound, we take the incidence graph of the projective plane of order $p^k=d-1$. This is a bipartite graph whose vertices correspond to points and lines of the projective plane $PG(2,p^k)$, and two of them are adjacent if they are a point and a line incident to each other. This graph has degree $d$ and eigenvalues $\pm d$ and $\pm \sqrt{d-1}$ (cf., e.g., \cite{GR}), and its HL-index is $\sqrt{d-1}$.

To prove the upper bound, we use the following facts. Let $G$ be a graph with average degree at most $d$. Let $\lambda_1\ge \lambda_2\ge \cdots \ge\lambda_n$ ($n=|V(G)|$) be its eigenvalues. Since the adjacency matrix $A$ of $G$ has zeros on the diagonal, we have that
\begin{equation}
\sum_{i=1}^n \lambda_i = tr(A) = 0.
\label{eq:1}
\end{equation}
The diagonal entries of the matrix $A^2$ are precisely the vertex degrees. Thus,
\begin{equation}
\sum_{i=1}^n \lambda_i^2 = tr(A^2) = \sum_{v\in V(G)} \deg(v) \le nd.
\label{eq:2}
\end{equation}

Let us first assume that $R(G)=\lambda_H>0$. Let $l>H$ be the smallest integer such that $\lambda_l<0$. Then we define $C=\{1,\dots,H\}$ and $D=\{l,l+1,\dots,n\}$.
If $R(G)\ne\lambda_H$ or $\lambda_H\le0$, then $R(G)=|\lambda_L|$ and $\lambda_L\le0$. In this case we define $l$ as the largest index such that $\lambda_l>0$ and we set $C=\{L,L+1,\dots,n\}$ and $D=\{1,\dots,l\}$. Clearly, we have
\begin{equation}
\sum_{i\in C} \lambda_i^2 \ge |C|\, R(G)^2.
\label{eq:3}
\end{equation}
By applying the Cauchy-Schwartz inequality, we derive the following bound
\begin{equation}
\sum_{i\notin C} \lambda_i^2\ge \sum_{i\in D} \lambda_i^2\ge
\frac{1}{|D|}\Bigl(\sum_{i\in D} |\lambda_i|\Bigr)^2.
\label{eq:4}
\end{equation}
By (\ref{eq:1}) we have
\begin{equation}
\sum_{i\in D} |\lambda_i|\ge \sum_{i\in C} |\lambda_i| \ge |C|\,R(G).
\label{eq:5}
\end{equation}
Finally, combining the above inequalities, we get
\begin{eqnarray}
nd &\ge \sum_{i=1}^n \lambda_i^2 \ge \sum_{i\in C} \lambda_i^2 + \sum_{i\in D} \lambda_i^2 \nonumber\\[2mm]
&\ge |C|\, R(G)^2 + \frac{1}{|D|}(|C|R(G))^2 \label{eq:added6}\\[2mm]
&\ge \tfrac{n}{2}\,R(G)^2 + \tfrac{n}{2}\,R(G)^2 = nR(G)^2. \nonumber
\end{eqnarray}
This implies the stated upper bound on $\widehat R(d)$, and completes the proof.
\end{proof}

The upper bound of $\sqrt{d}$ in Theorem \ref{thm:average degree} can actually be improved by involving dependence on $n=|V(G)|$.
All bounds (\ref{eq:3})--(\ref{eq:added6}) hold for the modified sequence $\l_i'$ ($1\le i\le n$), where we replace the largest eigenvalue $\l_1$ by $\l_1'=R(G)$ and put $\l_i'=\l_i$ otherwise. Then (\ref{eq:2}) and the fact that $\l_1\ge \bar d(G)$ imply that
$$
   nd\ge d^2 - R(G)^2 + \sum_{i=1}^n \l_i'^2 \ge
   (n-1)R(G)^2 + d^2
$$
if $d=\bar d(G)$. This yields the following upper bound
$$
   R(G) \ge \sqrt{d - \tfrac{d(d-1)}{n-1}}.
$$
In particular, if a graph with average degree $d$ has at most $d^2-d+1$ vertices, then $R(G)\le\sqrt{d-1}$.

\section{Chemically relevant graphs ($d\le 3$)}

Theorem \ref{thm:average degree} restricted to the case when $d=3$ gives the bound
$\sqrt{2} \le R(3) \le \sqrt{3}$. However, the upper bound can be improved by a more elaborate technique. We will combine combinatorial arguments and interlacing of eigenvalues to determine the exact value of $R(3)$.

\begin{theorem}
\label{thm:cubic}
The median eigenvalues $\l_H$ and $\l_L$ of every subcubic graph are contained in the interval $[-\sqrt{2},\sqrt{2}\,]$.
Consequently, $R(3) = \sqrt{2}$.
\end{theorem}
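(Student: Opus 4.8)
The lower bound is already available: Theorem~\ref{thm:average degree} with $d=3=2^1+1$ exhibits the incidence graph of $PG(2,2)$ (the Heawood graph) as a subcubic graph with $R=\sqrt2$, so $R(3)\ge\sqrt2$. Hence the whole content is the upper bound, namely that $\l_H,\l_L\in[-\sqrt2,\sqrt2\,]$ for every subcubic graph $G$ on $n$ vertices.

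My plan is to reduce this to a combinatorial statement by eigenvalue interlacing. Since $L\ge H$ we have $\l_L\le\l_H$, so it suffices to prove $\l_H(G)\le\sqrt2$ and $\l_L(G)\ge-\sqrt2$. The connected graphs of spectral radius at most $\sqrt2$ are exactly $K_1$, $K_2$ and $P_3$; call a graph all of whose components are among these three a \DEF{short linear forest}. If $S\subseteq V(G)$ is a set of vertices such that $G-S$ is a short linear forest, then $\l_1(G-S)\le\sqrt2$ and $\l_{\min}(G-S)\ge-\sqrt2$, and interlacing of the eigenvalues of an induced subgraph gives $\l_{|S|+1}(G)\le\l_1(G-S)\le\sqrt2$ and $\l_{n-|S|}(G)\ge\l_{\min}(G-S)\ge-\sqrt2$. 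Since $H-1=n-L=\lfloor\tfrac{n-1}{2}\rfloor$, both bounds follow once $|S|\le\lfloor\tfrac{n-1}{2}\rfloor$. So it is enough to prove: every subcubic graph $G$ on $n$ vertices, apart from a bounded list of small dense graphs (such as $K_4$ and $K_{3,3}$, for which $\l_H,\l_L\in[-\sqrt2,\sqrt2\,]$ is checked directly), has a set $S$ of at most $\lfloor\tfrac{n-1}{2}\rfloor$ vertices whose removal leaves a short linear forest.

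I would prove this by taking a counterexample $G$ with $n$ minimum. One may assume $G$ is connected, since $n\mapsto\lfloor\tfrac{n-1}{2}\rfloor$ is superadditive and so the statement for a disconnected graph follows from its components. The first half of the argument is a series of local reductions showing that a minimal counterexample contains no vertex of degree at most $1$, no two adjacent vertices of degree $2$, and no other short sparse subconfiguration: in each case one deletes a piece of bounded size, applies minimality to the smaller graph, and reattaches the piece so as to keep $G-S$ a short linear forest while staying within the budget for $|S|$. These reductions force $G$ to be locally dense, essentially cubic with no short low-degree substructures. The second half, which I expect to be the main obstacle, is to derive a contradiction by explicitly constructing a valid $S$ in such a dense subcubic graph: one must cover more than half of $V(G)$ by vertex-disjoint \emph{induced} copies of $P_3$, $K_2$ and $K_1$, using global structure (a $2$-factor, a large matching, or a small feedback vertex set of $G$, together with cutting long cycles and paths into pieces of order at most $3$) so as to keep the complement small enough. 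This is exactly the place where $\Delta(G)\le3$ is used essentially — the analogue fails already for maximum degree $4$ — and it is the source of the additional information about eigenvalue distributions of large subcubic graphs mentioned in the introduction. Combining this combinatorial statement with the interlacing reduction gives $\l_H,\l_L\in[-\sqrt2,\sqrt2\,]$ for every subcubic graph, and together with the Heawood graph this yields $R(3)=\sqrt2$.
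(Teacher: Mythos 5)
Your interlacing reduction is sound and is essentially the paper's Lemma \ref{lem:nice partition}: if one can delete a set $S$ with $|S|\le\lfloor\tfrac{n-1}{2}\rfloor$ so that $G-S$ is a disjoint union of $K_1$, $K_2$ and $P_3$, then $\l_H(G)\le\sqrt2$ and $\l_L(G)\ge-\sqrt2$ follow. The gap is that the combinatorial statement you reduce to is the entire difficulty of the theorem, and you leave it unproven: ``cover more than half of $V(G)$ by vertex-disjoint induced copies of $P_3$, $K_2$, $K_1$ using a $2$-factor or a large matching'' is a wish, not an argument, and you yourself flag it as ``the main obstacle.'' Moreover, the paper's proof is evidence that your clean target may not be attainable at all. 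The paper works with unfriendly partitions (which exist for every multigraph and are automatically nice for subcubic graphs), local surgery on bounded configurations, and the notion of an unstable vertex to tip a balanced partition into an unbalanced one; but in several configurations --- for instance an induced $5$-cycle whose vertices have five distinct external neighbours, when the extended partition comes out exactly balanced --- it \emph{cannot} produce a deletion set within the $\lfloor\tfrac{n-1}{2}\rfloor$ budget whose complement is a union of short paths. Instead it allows the remainder to contain one larger component $Q$ (such as $C_5(1,1,1,1,1)$, a graph containing $K_{2,3}$, or fused $3$-, $4$- and $5$-cycles), arranges the two sides to differ by $2k$ with $k\in\{2,3\}$, and applies interlacing at $\l_k(Q)$ rather than $\l_1(Q)$, using Lemmas \ref{lem:C521111} and \ref{lem:K23} to certify $\l_k(Q)\le\sqrt2$ and $\l^-_k(Q)\ge-\sqrt2$. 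Your scheme has no analogue of this mechanism, and within your budget (one extra deletion beyond a balanced split) a component like $C_5(1,1,1,1,1)$ cannot be broken into pieces of spectral radius at most $\sqrt2$.

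A secondary issue: you assert that the exceptional graphs form a bounded list, but nothing in your outline justifies finiteness; $K_4$, $K_{3,3}$ and the prism are exceptions you name, yet the configurations above could in principle produce infinitely many graphs for which your combinatorial claim fails even though $R(G)\le\sqrt2$ holds. As written, the proposal establishes only the (correct) interlacing reduction and the lower bound via the Heawood graph; the structural core of the proof is missing.
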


In order to prove the theorem, we need some preparation. Let us first state the eigenvalue interlacing theorem (cf., e.g., \cite{GR}) that we shall use in the sequel. For a graph $G$, we let $\l_i(G)$ be the $i$th largest and $\l^-_i(G)$ be the $i$th smallest eigenvalue of $G$ (counting multiplicities).

\begin{theorem}
\label{thm:interlacing}
Let\/ $A\subset V(G)$ be a vertex set of cardinality $k$, and let $K=G-A$. Then for every $i=1,\dots,n-k$, we have
$$
    \l_i(G)\ge \l_i(K)\ge \l_{i+k}(G) \qquad \textrm{and} \qquad
    \l^-_i(G)\le \l^-_i(K)\le \l^-_{i+k}(G).
$$
\end{theorem}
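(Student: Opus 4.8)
The plan is to recognize this as Cauchy's interlacing theorem for principal submatrices and to prove it through the Courant--Fischer min--max characterization. The starting point is the purely combinatorial observation that, since $K=G-A$ is obtained by deleting the $k$ vertices of $A$, the adjacency matrix $A(K)$ is exactly the principal submatrix of $A(G)$ obtained by striking out the $k$ rows and columns indexed by $A$. Identifying $\RR^{V(K)}$ with the subspace $W\subseteq\RR^{V(G)}$ of vectors that vanish on $A$, we have $\dim W=n-k$, and for every $x\in W$ with restriction $y$ to $V(K)$ the Rayleigh quotients agree: writing $\rho_M(x)=x^{\top}Mx/(x^{\top}x)$, one has $\rho_{A(G)}(x)=\rho_{A(K)}(y)$. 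This isometric embedding is the bridge that lets subspaces of the small space compete in the variational problems for the large space.

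Next I would invoke Courant--Fischer in both of its forms: for a symmetric $m\times m$ matrix $M$,
$$
\l_i(M)=\max_{\dim S=i}\ \min_{0\neq x\in S}\rho_M(x)
       =\min_{\dim S=m-i+1}\ \max_{0\neq x\in S}\rho_M(x).
$$
For the inequality $\l_i(K)\le\l_i(G)$ I use the first (max--min) form: pick an $i$-dimensional subspace $S\subseteq W$ attaining the outer maximum for $\l_i(K)$; since $S$ is still $i$-dimensional inside $\RR^{V(G)}$, the max-characterization for $G$ gives $\l_i(G)\ge\min_{0\neq x\in S}\rho_{A(G)}(x)=\l_i(K)$. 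For the inequality $\l_{i+k}(G)\le\l_i(K)$ I use the second (min--max) form: pick $U\subseteq W$ of dimension $(n-k)-i+1$ attaining the minimum for $\l_i(K)$. Because $(n-k)-i+1=n-(i+k)+1$, the subspace $U$, viewed in $\RR^{V(G)}$, is an admissible competitor in the min-characterization of $\l_{i+k}(G)$, so $\l_{i+k}(G)\le\max_{0\neq x\in U}\rho_{A(G)}(x)=\l_i(K)$.

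The one point requiring care is exactly this dimension bookkeeping: one must verify that the codimension $(n-k)-i+1$ of the optimal subspace for $K$ matches the index shift $i\mapsto i+k$ demanded by $G$, and that the index ranges keep all the extremal subspaces nonempty for $1\le i\le n-k$. Once the two inequalities for the largest eigenvalues are established, the corresponding statement for the smallest eigenvalues follows formally from the relabeling $\l^-_j(M)=\l_{m-j+1}(M)$. Indeed, substituting $j=(n-k)-i+1$ into the chain $\l_j(G)\ge\l_j(K)\ge\l_{j+k}(G)$ and rewriting both ends in $\l^-$-notation turns it into $\l^-_i(G)\le\l^-_i(K)\le\l^-_{i+k}(G)$; equivalently, one may simply apply the first pair of inequalities to $-A(G)$, whose corresponding principal submatrix is $-A(K)$. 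I expect no genuine obstacle here beyond keeping the indices aligned.
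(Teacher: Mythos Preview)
Your proof is correct and is in fact the standard argument via the Courant--Fischer min--max principle; the dimension bookkeeping and the passage to the $\l^-$ inequalities are handled properly. Note, however, that the paper does not actually prove this theorem: it is stated there as a classical result with a reference to Godsil--Royle, and is used as a tool in the proofs of the main theorems rather than being established anew. So there is no ``paper's own proof'' to compare against; you have supplied the missing justification, and your approach is the expected one.
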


\begin{figure}[htb]
   \centering
   \includegraphics[width=9cm]{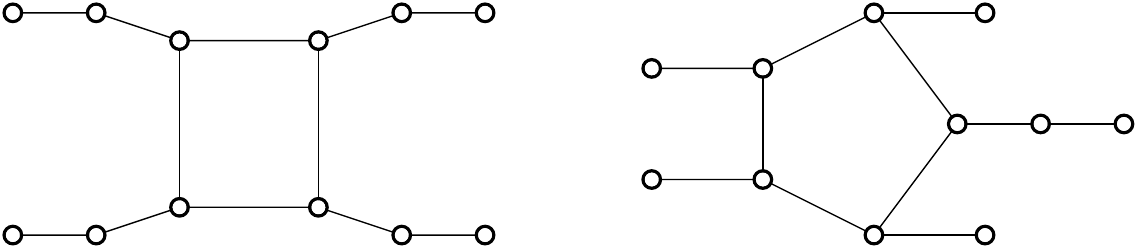}
   \caption{The graphs $C_4(2,2,2,2)$ and $C_5(2,1,1,1,1)$}
   \label{fig:1}
\end{figure}

In estimating the eigenvalues, we shall need the following facts. If $P$ is a path of length at most 2, then $\l_1(P)=-\l^-_1(P)\le\sqrt{2}$. For $k\ge 3$ and non-negative integers $t_1,\dots t_k$, let $C_k(t_1,\dots,t_k)$ be the graph obtained from the $k$-cycle by adding, for $i=1,\dots,k$, a pendent path of length $t_i$ starting at the $i$th vertex of the cycle. See Figure \ref{fig:1} for an example.
The following lemma is easy to check by computer (by Theorem \ref{thm:interlacing}, it only needs to be checked for $C_4(2,2,2,2)$ and $C_5(2,1,1,1,1)$ since other graphs in the lemma are induced subgraphs of these two).

\begin{lemma}
\label{lem:C521111}
{\rm (a)}
Let\/ $G=C_4(t_1,t_2,t_3,t_4)$, where $0\le t_i\le 2$ for $i=1,\dots,4$. Then $\l_2(G)=-\l_2^-(G)\le\sqrt{2}$.

{\rm (b)}
Let\/ $G = C_5(t_1,\dots,t_5)$ where $0\le t_1\le 2$ and $0\le t_i\le 1$ for $i=2,\dots,5$. Then $\l_3(G)<\sqrt{2}$ and $\l^-_3(G) > -\sqrt{2}$.
\end{lemma}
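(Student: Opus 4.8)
The plan is to carry out the reduction indicated right after the statement, and then settle the two extremal graphs by a symmetry-assisted eigenvalue count. For part~(a), each graph $C_4(t_1,t_2,t_3,t_4)$ with all $t_i\le 2$ is obtained from $C_4(2,2,2,2)$ by deleting the surplus endpoints of the pendant paths, hence is an induced subgraph of it, and by Theorem~\ref{thm:interlacing} this gives $\lambda_2(C_4(t_1,\dots,t_4))\le\lambda_2(C_4(2,2,2,2))$. Moreover each $C_4(t_1,\dots,t_4)$ is bipartite (a $4$-cycle with trees attached), so its spectrum is symmetric about $0$ and $\lambda_2^-=-\lambda_2$; hence it suffices to prove $\lambda_2(C_4(2,2,2,2))\le\sqrt2$. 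In the same way, for part~(b) every $C_5(t_1,\dots,t_5)$ with $t_1\le 2$ and $t_i\le1$ for $i\ge2$ is an induced subgraph of $C_5(2,1,1,1,1)$, so Theorem~\ref{thm:interlacing} reduces the claim to proving $\lambda_3(C_5(2,1,1,1,1))<\sqrt2$ and $\lambda_3^-(C_5(2,1,1,1,1))>-\sqrt2$.

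For the two remaining graphs I would diagonalize the adjacency matrix in blocks according to its automorphisms. The graph $C_4(2,2,2,2)$ has a cyclic automorphism of order $4$; representing eigenvectors as $\omega$-equivariant functions, with $\omega$ ranging over the fourth roots of unity, decomposes the spectrum into the union over the four values of $\omega$ of the eigenvalues of a $3\times3$ quotient matrix on one cycle vertex together with its attached pendant path. For the two primitive fourth roots of unity this quotient contributes exactly $\{0,\sqrt2,-\sqrt2\}$, so $\sqrt2$ has multiplicity at least $2$; for $\omega=1$ and $\omega=-1$ it contributes, respectively, the roots of $\lambda^3-2\lambda^2-2\lambda+2$ and of $\lambda^3+2\lambda^2-2\lambda-2$. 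A short analysis of these cubics — the first is negative at $\sqrt2$ while the sum of its roots is $2<3\sqrt2$, so it cannot have three roots above $\sqrt2$ and thus has exactly one; the second is positive and increasing on $[\sqrt2,\infty)$, so it has none — shows that exactly one eigenvalue of $C_4(2,2,2,2)$ exceeds $\sqrt2$ (the Perron root), whence $\lambda_2=\sqrt2$. The graph $C_5(2,1,1,1,1)$ has the involution fixing the pendant path of length $2$ and interchanging the two arcs of the $5$-cycle; splitting into the $7$-dimensional symmetric and $4$-dimensional antisymmetric subspaces makes the characteristic polynomial factor as $\lambda\,(\lambda^6-\lambda^5-7\lambda^4+5\lambda^3+11\lambda^2-4\lambda-4)$ on the symmetric part — the factor $\lambda$ coming from a simple null eigenvector, which one writes down explicitly — times $\lambda^4+\lambda^3-3\lambda^2-\lambda+1$ on the antisymmetric part. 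Evaluating these polynomials at $\pm\sqrt2$ (the sextic takes the values $-2\pm2\sqrt2$, the quartic the values $-1\pm\sqrt2$, so $\pm\sqrt2$ are not eigenvalues) and then counting the roots of each factor lying in $(-\infty,-\sqrt2)$, in $(-\sqrt2,\sqrt2)$, and in $(\sqrt2,\infty)$ shows that precisely two eigenvalues of $C_5(2,1,1,1,1)$ exceed $\sqrt2$ and precisely two lie below $-\sqrt2$, which yields $\lambda_3<\sqrt2$ and $\lambda_3^->-\sqrt2$.

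The step needing the most care is this last root count for $C_5(2,1,1,1,1)$: since the graph is not bipartite, the two median bounds are genuinely different and both strict, so one must certify that neither $\sqrt2$ nor $-\sqrt2$ is an eigenvalue and that no stray root sits just outside the interval $[-\sqrt2,\sqrt2]$. After the symmetry reduction, however, this is a finite check on a sextic and a quartic with integer coefficients, and it can be done rigorously — for instance by a Sturm-sequence count on each of the three intervals, performing the comparisons against the irrational threshold $\sqrt2$ symbolically rather than numerically. This is exactly the routine computer verification referred to after the statement, and Theorem~\ref{thm:interlacing} then propagates the conclusion to all graphs in the lemma.
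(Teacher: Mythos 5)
Your reduction step is exactly the one the paper uses: by Theorem~\ref{thm:interlacing} every graph in the lemma is an induced subgraph of $C_4(2,2,2,2)$ or $C_5(2,1,1,1,1)$, and bipartiteness handles the symmetry of the spectrum in part~(a). Where you differ is that the paper simply declares the two remaining graphs ``easy to check by computer,'' whereas you replace that computer check with a hand computation via symmetry-adapted block diagonalization. I have verified your computation and it is correct: for $C_4(2,2,2,2)$ the $\omega$-quotient is the $3\times3$ matrix with corner entry $\omega+\bar\omega\in\{2,0,-2,0\}$, giving $P_3$-spectra $\{0,\pm\sqrt2\}$ for the two primitive roots and the cubics $\lambda^3-2\lambda^2-2\lambda+2$ and $\lambda^3+2\lambda^2-2\lambda-2$ for $\omega=\pm1$; your sign-and-sum argument (together with the implicit parity observation that a sign change from $-2$ at $\sqrt2$ to $+\infty$ forces an odd number of roots beyond $\sqrt2$) correctly yields exactly one eigenvalue above $\sqrt2$, hence $\lambda_2=\sqrt2$. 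For $C_5(2,1,1,1,1)$ the symmetric and antisymmetric factors are indeed $\lambda\,(\lambda^6-\lambda^5-7\lambda^4+5\lambda^3+11\lambda^2-4\lambda-4)$ and $\lambda^4+\lambda^3-3\lambda^2-\lambda+1$, the values at $\pm\sqrt2$ are $-2\pm2\sqrt2$ and $-1\pm\sqrt2$ as you state, and sign changes at the integer points $-3,-2,-1,0,1,2,3$ together with $\pm\sqrt2$ locate all ten nonzero roots, giving exactly two eigenvalues in $(\sqrt2,\infty)$ and two in $(-\infty,-\sqrt2)$, i.e.\ $\lambda_3<\sqrt2$ and $\lambda_3^->-\sqrt2$. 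The one point where you are right to flag extra care is the final root count for the non-bipartite graph; your suggestion of a Sturm-sequence count with symbolic comparison against $\sqrt2$ is a sound way to make it fully rigorous, though the explicit sign evaluations above already suffice. Your version buys a self-contained, computer-free proof of the lemma at the cost of a page of algebra; the paper's buys brevity.
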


\begin{figure}[htb]
   \centering
   \includegraphics[width=8cm]{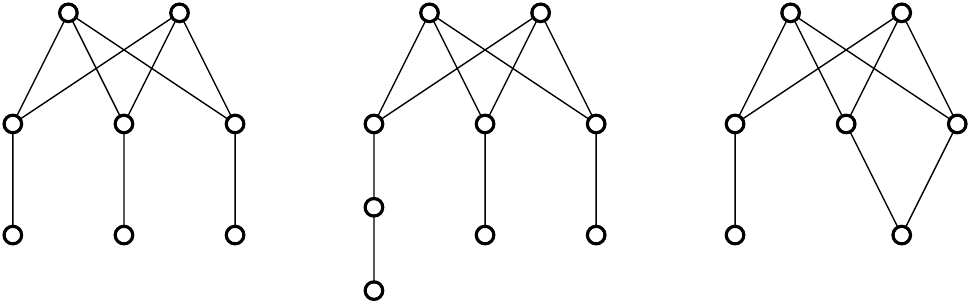}
   \caption{Some graphs containing $K_{2,3}$}
   \label{fig:2}
\end{figure}

We will need a similar result for some graphs containing $K_{2,3}$ and for some graphs containing cycles of length 3 and 4 (or 3 and 5) sharing an edge.

\begin{lemma}
\label{lem:K23}
{\rm (a)}
Let\/ $G$ be any one of the graphs shown in Figure \ref{fig:2}. Then $\l_2(G)=-\l_2^-(G)\le\sqrt{2}$.

{\rm (b)}
Let\/ $G$ be a graph isomorphic to any of the graphs shown in Figure \ref{fig:3}. Then $\l_3(G)<\sqrt{2}$ and $\l^-_3(G) > -\sqrt{2}$.

{\rm (c)}
Let\/ $G$ be a graph isomorphic to any of the graphs shown in Figure \ref{fig:4}. Then $\l_3(G)<\sqrt{2}$ and $\l^-_3(G) > -\sqrt{2}$.
\end{lemma}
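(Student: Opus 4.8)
The statement is a finite one: it asserts eigenvalue bounds for each member of three explicitly given finite families of small (subcubic) graphs. The natural route is therefore to reduce, via the interlacing theorem (Theorem \ref{thm:interlacing}), to a handful of ``largest'' graphs in each family, and then to verify the bounds for those by an exact computation --- just as Lemma \ref{lem:C521111} is verified only for $C_4(2,2,2,2)$ and $C_5(2,1,1,1,1)$.

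For part (a): every graph $G$ in Figure \ref{fig:2} contains $K_{2,3}$ and is subcubic, so --- since the two vertices of degree $3$ in $K_{2,3}$ are already saturated --- $G$ is built from $K_{2,3}$ by attaching short pendant paths (and possibly a few extra vertices joining its three degree-$2$ vertices) as drawn. Consequently each such $G$ is an induced subgraph of one of finitely many maximal graphs $G^\ast$ appearing in the figure; that is, $G = G^\ast - A$ for some $A \subseteq V(G^\ast)$. By Theorem \ref{thm:interlacing}, $\l_2(G) \le \l_2(G^\ast)$ and $\l_2^-(G) \ge \l_2^-(G^\ast)$, so it suffices to prove $\l_2(G^\ast) \le \sqrt 2$ and $\l_2^-(G^\ast) \ge -\sqrt 2$ for each of these finitely many $G^\ast$. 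When $G^\ast$ is bipartite its spectrum is symmetric about $0$ and only the first inequality needs to be checked. I would carry out the checks by computing $\phi(G^\ast,x)$ with the standard pendant-vertex recursion $\phi(H,x) = x\,\phi(H-v,x) - \phi(H-v-u,x)$ (for a pendant vertex $v$ adjacent to $u$) and then determining the sign of $\phi(G^\ast,\cdot)$ just to the right of $\sqrt 2$ (and, in the non-bipartite cases, just to the left of $-\sqrt 2$); since $\sqrt 2$ is algebraic this is a rigorous finite computation rather than a numerical one.

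Parts (b) and (c) go the same way, with $K_{2,3}$ replaced by the graph formed by a triangle and a $4$-cycle (for (b)), resp.\ a triangle and a $5$-cycle (for (c)), sharing an edge; again two vertices are already of degree $3$ and the others of degree $2$ carry short pendant paths as in Figures \ref{fig:3} and \ref{fig:4}. Each such graph is an induced subgraph of one of finitely many maximal graphs, so by Theorem \ref{thm:interlacing} it suffices to prove $\l_3(G^\ast) < \sqrt 2$ and $\l_3^-(G^\ast) > -\sqrt 2$ for those (the index $3$ being the one relevant to the proof of Theorem \ref{thm:cubic}). Since these graphs contain a triangle they are not bipartite, so both inequalities must genuinely be checked; but this is the same kind of finite, exact computation, and the strict inequalities are inherited by induced subgraphs, because $\l_3(G) \le \l_3(G^\ast) < \sqrt 2$ and $\l_3^-(G) \ge \l_3^-(G^\ast) > -\sqrt 2$.

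The only step that requires real care is the reduction: one has to be sure that the finite list of maximal graphs actually dominates, under vertex deletion, every graph drawn in the figures --- in particular that the pendant paths cannot be longer than shown. This is precisely where the elementary facts $\l_1(P_3) = \sqrt 2$ and $\l_1(P_4) = \tfrac{1+\sqrt 5}{2} > \sqrt 2$ come in: a pendant path of length $3$ already contributes an eigenvalue above $\sqrt 2$ high in the spectrum, which would push $\l_2$ (or $\l_3$) past $\sqrt 2$, so only the short attachments displayed can occur among graphs satisfying the claimed bound. Once the maximal graphs are pinned down, the remaining work is the finite eigenvalue verification, which I would present as a computer check in the spirit of Lemma \ref{lem:C521111}.
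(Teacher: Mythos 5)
Your proposal is correct and matches the paper's (implicit) approach: the paper offers no written proof of Lemma~\ref{lem:K23}, relying --- exactly as for Lemma~\ref{lem:C521111} --- on interlacing to reduce to the finitely many maximal graphs in Figures~\ref{fig:2}--\ref{fig:4} and a computer/exact verification of their eigenvalues. Your additional discussion of why the pendant paths cannot be longer is unnecessary (the figures already specify the graphs), but it does no harm.
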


\begin{figure}[htb]
   \centering
   \includegraphics[width=11cm]{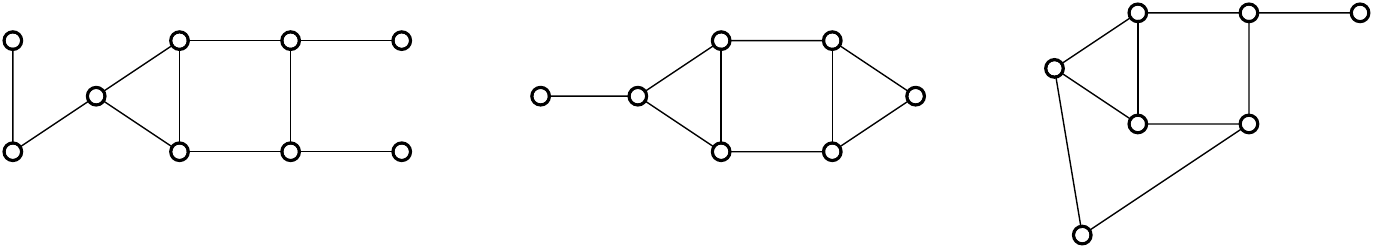}
   \caption{Three graphs containing 3- and 4-cycle sharing an edge}
   \label{fig:3}
\end{figure}

\begin{figure}[htb]
   \centering
   \includegraphics[width=6.3cm]{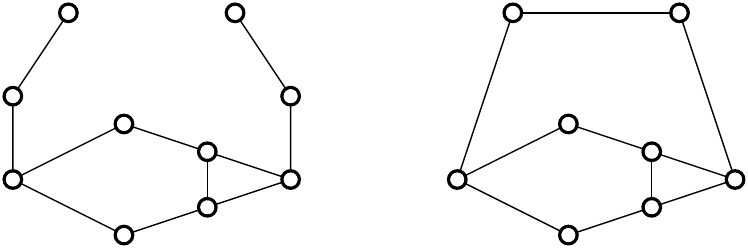}
   \caption{Two graphs containing 3- and 5-cycle sharing an edge}
   \label{fig:4}
\end{figure}

Let us recall that a graph $G$ is said to be \DEF{subcubic} if $\Delta(G)\le 3$.
A partition $\{A,B\}$ of vertices of $G$ is called \DEF{unfriendly} if every vertex in $A$ has at least as many neighbors in $B$ as in $A$, and every vertex in $B$ has at least as many neighbors in $A$ as in $B$.
A partition $\{A,B\}$ of $V(G)$ is \DEF{unbalanced} if $|A|\ne|B|$, and it is a \DEF{nice partition} if the subgraphs induced on each of the parts are disjoint unions of paths of length at most 2. We will use the following lemma:

\begin{lemma}
\label{lem:nice partition}
If\/ $G$ is a subcubic graph with an unbalanced nice partition, then $R(G)\le\sqrt{2}$.
\end{lemma}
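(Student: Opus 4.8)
The plan is to use eigenvalue interlacing (Theorem~\ref{thm:interlacing}) with the vertex set that we delete chosen to be the \emph{smaller} side of the unbalanced nice partition. Suppose $\{A,B\}$ is an unbalanced nice partition of the subcubic graph $G$, and without loss of generality assume $|A|<|B|$, so $|A|\le \lfloor (n-1)/2\rfloor$ where $n=|V(G)|$. Let $K=G-A=G[B]$. Since $\{A,B\}$ is a nice partition, $K$ is a disjoint union of paths, each of length at most $2$; hence every eigenvalue of $K$ lies in $[-\sqrt2,\sqrt2\,]$, because for a path $P$ of length at most $2$ we have $\l_1(P)=-\l_1^-(P)\le\sqrt2$ (recorded in the text preceding Lemma~\ref{lem:C521111}), and eigenvalues of a disjoint union are the union of the eigenvalues of the components.

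The heart of the argument is then bookkeeping with indices. Put $k=|A|$, so $K$ has $n-k$ vertices and $n-k>k$. Applying the interlacing inequality $\l_i(G)\ge\l_i(K)\ge\l_{i+k}(G)$ with $i=k+1$ gives $\l_{2k+1}(G)\le\l_{k+1}(K)\le\sqrt2$; since $2k+1\le n$ we have an upper index into the spectrum of $G$ bounded above by $\sqrt2$. Symmetrically, using $\l^-_i(G)\le\l^-_i(K)\le\l^-_{i+k}(G)$ with $i=k+1$ gives $\l^-_{2k+1}(G)\ge\l^-_{k+1}(K)\ge-\sqrt2$. The remaining task is to check that for an unbalanced partition the index $2k+1$ is small enough (resp.\ large enough after converting $\l^-$ to $\l$) that it controls both median positions $H=\lfloor(n+1)/2\rfloor$ and $L=\lceil(n+1)/2\rceil$. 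Since $k\le\lfloor(n-1)/2\rfloor$, one gets $2k+1\le n$ when $n$ is odd and $2k+1\le n-1$ when $n$ is even, and in either case $H\ge 2k+1$ fails in general — so more care is needed: one should instead take $i$ as large as possible, namely $i=n-2k$ (valid since $n-k>k$ forces $n-2k\ge 1$), which yields $\l_{n-k}(G)\le\l_{n-2k}(K)$. The clean way to organize this is: from $\l_{k+1}(K)\le\sqrt2$ and $\l_i(G)\ge\l_i(K)$ we only conclude about $\l_{2k+1}(G)$, so we instead bound $\l_H(G)$ directly by choosing the interlacing index to land exactly at $H$. Concretely, since $H-k\ge 1$ (because $k\le H-1$ as $|A|<n/2\le H$), interlacing gives $\l_H(G)\le\l_{H-k}(K)\le\sqrt2$, and likewise $\l^-_H(G)\ge\l^-_{H-k}(K)\ge-\sqrt2$; the analogous inequalities at index $L$ follow because $L-k\ge1$ as well (here the \emph{unbalanced} hypothesis is what guarantees $k<n/2$ strictly, hence $k<L$ even when $n$ is even). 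Combining, both $\l_H(G)$ and $\l_L(G)$ lie in $[-\sqrt2,\sqrt2\,]$, so $R(G)=\max\{|\l_H|,|\l_L|\}\le\sqrt2$.

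I expect the main obstacle to be exactly this index arithmetic: making sure that after deleting the $k$ vertices of the smaller part, the shifted index $H-k$ (and $L-k$) is a legal index into the $(n-k)$-vertex graph $K$ — i.e.\ at least $1$ and at most $n-k$ — and that this legality is precisely what ``unbalanced'' buys us (a perfectly balanced partition would give $k=n/2$, forcing $H-k=1$ but $L-k=0$, which is illegal, and indeed a balanced partition does \emph{not} suffice). Once the index bookkeeping is set up correctly for both parities of $n$, the rest is immediate from the observation that induced disjoint unions of paths of length at most $2$ have spectrum inside $[-\sqrt2,\sqrt2\,]$ and from one application each of the two interlacing inequalities in Theorem~\ref{thm:interlacing}.
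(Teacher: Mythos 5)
Your proposal is correct and follows essentially the same route as the paper: delete the smaller part $A$, observe that $G[B]$ is a disjoint union of paths of length at most $2$ with spectrum in $[-\sqrt2,\sqrt2\,]$, and apply Theorem~\ref{thm:interlacing} using $|A|<H$ (the paper phrases the bookkeeping as $\l_H(G)\le\l_{|A|+1}(G)\le\l_1(G(B))$ and $\l_L(G)\ge\l_{n-|A|}(G)\ge\l^-_1(G(B))$, which is equivalent to your index choice $i=H-k$). The false starts in the middle and the small slip in your final aside (for a balanced partition with $n$ even one gets $H-k=0$, not $L-k=0$) do not affect the validity of the argument you settle on.
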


\begin{proof}
Let $\{A,B\}$ be a nice partition with $|B|>|A|$. Since $G(B)$ is composed of paths of length at most 2, we have $\l_1(G(B))=|\l_{|B|}(G(B))| \le \l_1(P_3)=\sqrt{2}$. Since $G(B)=G-A$ is obtained from $G$ by deleting $|A|$ vertices and $|A|<H$, the eigenvalue interlacing theorem shows that $\l_H(G)\le \l_{|A|+1}(G)\le \l_1(G(B))\le \sqrt{2}$. Similarly, interlacing of smallest eigenvalues gives $\l_L(G)\ge \l_{n-|A|}(G)\ge \l_{|B|}(G(B))\ge -\sqrt{2}$.
This implies that $-\sqrt{2}\le \l_L(G)\le \l_H(G)\le \sqrt{2}$ and thus $R(G)\le\sqrt{2}$.
\end{proof}

Observe that every unfriendly partition of a subcubic graph is nice (but not necessarily unbalanced). Unfriendly partitions are easy to find.

\begin{lemma}
\label{lem:unfriendly partition}
Every multigraph has an unfriendly partition.
\end{lemma}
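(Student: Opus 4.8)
The plan is to prove that every multigraph $G$ has an unfriendly partition by a standard extremal/local-search argument on the number of edges crossing the partition. First I would consider the set of all partitions $\{A,B\}$ of $V(G)$ and, among them, choose one that maximizes the number of crossing edges $e(A,B)$ (this maximum exists since there are finitely many partitions; loops and multiplicities are counted with multiplicity, but loops never cross so they are irrelevant). I claim such a maximizing partition is unfriendly. Indeed, suppose some vertex $v\in A$ has strictly more neighbors inside $A$ than in $B$ (counting multiplicity). Then moving $v$ from $A$ to $B$ changes $e(A,B)$ by $(\text{edges from }v\text{ to }A) - (\text{edges from }v\text{ to }B) > 0$, contradicting maximality. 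The symmetric argument handles a vertex of $B$ with more neighbors in $B$ than in $A$. Hence at the maximum no such vertex exists, which is exactly the definition of an unfriendly partition.

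The key steps, in order, are: (1) observe that the quantity $e(A,B)$ is a well-defined nonnegative integer bounded above by $|E(G)|$, so a maximizing partition exists; (2) fix such a maximizer $\{A,B\}$; (3) for a hypothetical ``bad'' vertex $v$, compute the exact change in $e(A,B)$ upon switching $v$ to the other side, namely $\deg_{\text{same}}(v) - \deg_{\text{other}}(v)$, and note this is positive precisely when $v$ violates the unfriendly condition; (4) conclude by contradiction. One should remark that the partition may be trivial (e.g.\ $B=\varnothing$ for an edgeless graph), which is fine since the unfriendly condition is vacuous there; and that multigraph edges are counted with multiplicity throughout, which does not affect the argument since the switching computation is linear in the edge multiplicities.

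I do not expect a genuine obstacle here — this is a textbook max-cut style argument. The only point requiring a word of care is that we are dealing with multigraphs rather than simple graphs, so the phrase ``at least as many neighbors'' must be read as ``at least as many edges, counted with multiplicity,'' and the finiteness of the optimization is over partitions rather than over any infinite process; both are routine. (For infinite graphs the statement would instead require a compactness or measure-theoretic argument, but the lemma as stated concerns finite multigraphs, so the finite extremal argument suffices.)
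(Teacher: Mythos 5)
Your proposal is correct and follows exactly the same max-cut argument as the paper, which simply takes a partition maximizing the number of crossing edges and notes it must be unfriendly. You have merely spelled out the switching computation that the paper leaves as "easy to see."
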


\begin{proof}
Let us consider a partition of $V(G)$ such that the number of edges between $A$ and $B$ is maximum possible. It is easy to see that the partition is unfriendly.
\end{proof}

Unfortunately, some graphs have no unbalanced unfriendly partition, and for some graphs, every nice partition is balanced. Examples of such graphs are $K_4$ and the prism $K_3\Box K_2$.

Let $\{A,B\}$ be a nice partition of $V(G)$. Suppose that $v$ is a vertex in $A$.
If $\{A\setminus\{v\},B\cup\{v\}\}$ is also a nice partition, then we say that $v$ is \DEF{unstable} (with respect to the partition $\{A,B\}$).
If $v\in B$, its stability is defined analogously.

\begin{lemma}
\label{lem:flippable}
Suppose that\/ $\{A,B\}$ is an nice partition of a subcubic graph $G$ such that every component of $G(B)$ has at most one edge. If a vertex $v\in A$ has at most one neighbor in $B$, then $v$ is unstable.
\end{lemma}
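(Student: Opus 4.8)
The plan is to verify directly that the partition $\{A\setminus\{v\},\,B\cup\{v\}\}$ obtained by moving $v$ from $A$ to $B$ is again nice; by definition of instability this is exactly what has to be shown. Two things must be checked: that $G(A\setminus\{v\})$ is a disjoint union of paths of length at most $2$, and that $G(B\cup\{v\})$ is a disjoint union of paths of length at most $2$.

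The first of these is immediate and requires no hypothesis on $v$: $G(A\setminus\{v\})$ is an induced subgraph of $G(A)$, and deleting a vertex from a disjoint union of paths of length at most $2$ again yields a disjoint union of paths of length at most $2$. So the whole argument reduces to analyzing $G(B\cup\{v\})$, and here we distinguish cases according to the number of neighbors of $v$ in $B$, which by hypothesis is $0$ or $1$. If $v$ has no neighbor in $B$, then $G(B\cup\{v\})$ is just $G(B)$ together with the isolated vertex $v$, hence still of the required form. If $v$ has exactly one neighbor $w\in B$, then, since every component of $G(B)$ has at most one edge, $w$ has at most one neighbor $x$ in $B$; the component of $G(B)$ containing $w$ is therefore either the single vertex $\{w\}$ or the single edge $wx$. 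Adjoining $v$ and the edge $vw$ turns this component into $P_2$ (the edge $vw$) in the first case and into the path $x\!-\!w\!-\!v$ of length $2$ in the second case. The only way this could fail is if $v$ were adjacent to $x$ as well, creating a triangle or a longer structure — but that is excluded because $v$ has at most one neighbor in $B$, namely $w$. Hence in every case $G(B\cup\{v\})$ remains a disjoint union of paths of length at most $2$.

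Combining the two checks shows that $\{A\setminus\{v\},\,B\cup\{v\}\}$ is a nice partition, so $v$ is unstable. I do not expect any genuine obstacle here; the one point to be careful about is the case where $w$ lies on an existing edge of $G(B)$, where one must invoke the bound on the number of neighbors of $v$ in $B$ to rule out the creation of a triangle or a vertex of degree $3$ inside $B\cup\{v\}$, and also use that components of $G(B)$ carry at most one edge to bound the degree of $w$ within $B$.
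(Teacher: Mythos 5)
Your proposal is correct and follows essentially the same argument as the paper: $G(A\setminus\{v\})$ is trivially still a disjoint union of short paths, and the only component of $G(B\cup\{v\})$ that could grow is the one containing $v$, which has at most two edges because $v$ contributes at most one new edge to a component that had at most one. Your write-up is just a more detailed case analysis of the same idea, so there is nothing to change.
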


\begin{proof}
By removing $v$ from $A$ and putting it in $B$, we obtain a partition where $G(A)$ still consists of components with at most two edges. The only component in $G(B)$ possibly having more than one edge is the component containing $v$. However, since $v$ has at most one neighbor in $B$, that component has at most two edges.
\end{proof}

We are ready for the proof of our main theorem.

\begin{proof}[Proof of Theorem \ref{thm:cubic}]
By Theorem \ref{thm:average degree}, we only have to argue about the first statement of the theorem.
So, let us assume $G$ is a subcubic graph of order $n\ge3$. Our goal is to find an unbalanced nice partition and apply Lemma \ref{lem:nice partition}. We start by an unfriendly partition $\{A,B\}$ of $G$. If $|A|\ne|B|$ (and in particular if $n$ is odd), we have an unbalanced nice partition and we are done. Similarly, if there is an unstable vertex with respect to this partition, one of the two partitions is unbalanced, and we are done.

If $v$ is a vertex of degree at most 1, then $v$ is unstable. If $v$ is a vertex of degree 2, and its neighbors are $u,w$, we start with an unfriendly partition $\{A',B'\}$ of the multigraph $G'=(G-v)+uw$. Note that by adding the edge, we may have created a double edge. Now we add $v$ to $A'$ to obtain a nice partition of $V(G)$. If $u$ and $w$ are in different parts of the partition, then $v$ is clearly unstable. If they are in the same part, then $uw$ is not a double edge in $G'$; thus $u$ and $w$ are non-adjacent in $G$ and they have no neighbors in the same part. Thus $v$ is unstable also in this case. So, we may assume that every vertex of $G$ has degree three.

Suppose that $G$ contains a vertex $v$ such that the shortest cycle containing $v$ has length at least 6. Let $x,y,z$ be the neighbors of $v$, and let $x',x'',y',y'',z',z''$ be the vertices different from $v$ such that $x',x''$ are neighbors of $x$, vertices $y',y''$ are neighbors of $y$, and $z',z''$ are neighbors of $z$. Let $G'=(G-\{v,x,y,z\})+\{x'x'',y'y'',z'z''\}$; if $x'x''\in E(G)$ then we create parallel edges, and similarly for $y'y''$ and $z'z''$. Let $\{A',B'\}$ be an unfriendly partition of $G'$. If two vertices in $\{x',x'',y',y'',z',z''\}$ are in the same part, then they have no common neighbor in the same part (since the partition is unfriendly) and they are not adjacent (since the only possible edges among these vertices are $x'x'',y'y'',z'z''$ and they give rise to parallel edges which force their ends to be in different parts). Therefore, adding each of $x,y,z$ into any part of the partition produces a nice partition of $G-v$. Consequently, $\{A'\cup\{v\},B'\cup\{x,y,z\}\}$ and $\{A'\cup\{x,y,z\},B'\cup\{v\}\}$ are nice partitions of $G$. Clearly, at least one of them is unbalanced.

Suppose now that $G$ contains a cycle $C=v_1v_2v_3v_4v_5$ of length 5 and that the respective neighbors $u_i$ of $v_i$ ($i=1,\dots,5$) are all distinct and do not belong to $C$. In this case, let $G'=(G-V(C))+\{u_2u_4,u_3u_5\}$ (possibly creating parallel edges). Let $U=\{u_1,\dots,u_5\}$. Consider an unfriendly partition $\{A',B'\}$ of $G'$; we may assume that $|A'\cap U|\le 2$. If $|A'\cap U|=2$, then we extend the partition to a nice partition $\{A,B\}$ of $V(G)$ by putting $v_i$ ($1\le i\le5$) into $A$ if $u_i\in B'$, and putting it into $B$, otherwise. If $A'\cap U=\{u_i,u_{i+1}\}$ (indices taken modulo 5), then $v_{i+3}$ is unstable by Lemma \ref{lem:flippable}. By symmetry we may henceforth assume that either $A'\cap U=\{u_1,u_3\}$ or $A'\cap U=\{u_2,u_4\}$. In the former case, $v_4$ is unstable: by moving $v_4$ into $B$, we create a $P_3$ in $G(B)$ consisting of vertices $v_3,v_4,u_4$. But the component cannot be larger since $u_2u_4\notin E(G)$. To see this, recall that we have added the edge $u_2u_4$ into $G'$. Since $u_2$ and $u_4$ both ended up in $B'$, we have not created parallel edges, thus $u_2u_4\notin E(G)$. In the latter case, $v_5$ is unstable by a symmetric argument.

Suppose now that  $A'\cap U=\{u_i\}$. By symmetry, we may assume that $i\le3$. We set $A=A'\cup\{v_{i+1},v_{i+2},v_{i+4}\}$, and $B=B'\cup\{v_i,v_{i+3}\}$ (indices modulo 5). It is easy to see that this is a nice partition of $G$. Similarly as above, we see that $u_{i+1}$ has no neighbor in $B$, and therefore $v_{i+1}$ is unstable.

Finally, suppose that $U\subseteq B'$. Note that in this case $u_1,\dots,u_5$ are mutually nonadjacent since the added edges in $G'$ prevent further edges between these vertices, and double edges in $G'$ were not possible in this case. Moreover, $u_2,u_3,u_4,u_5$ have no neighbors in $B'$. As before, there is a nice partition formed by $A=A'\cup\{v_1,v_2,v_4\}$, and $B=B'\cup\{v_3,v_5\}$. If it is balanced, then we know that $|B'|=|A'|-1$.
In this case we form another partition as follows. We take $A=A'$ and $B=B'\cup\{v_1,\dots,v_5\}$.
Note that $|B|=|A|+4$ and that all components of $G(B)$ are paths of length at most 2, except for the component $Q$ containing $C$. Observe that $Q$ is either isomorphic to the graph $C_5(2,1,1,1,1)$ or $C_5(1,1,1,1,1)$. By Lemma \ref{lem:C521111}(b), we conclude that $\l_3(G(B))\le\sqrt{2}$ and $\l^-_3(G(B))\ge -\sqrt{2}$. Theorem \ref{thm:interlacing} now implies that $$\l_H(G)\le\l_{|A|+3}(G)\le\l_3(G(B))\le\sqrt{2}$$
and similarly
$$\l_L(G)\ge\l^-_{|A|+3}(G)\ge\l^-_3(G(B))\ge -\sqrt{2}$$
as claimed. This completes the case of the 5-cycle with distinct neighbors.

Suppose now that $G$ contains a cycle $C=v_1v_2v_3v_4$ of length 4 and that the respective neighbors $u_i$ of $v_i$ ($i=1,\dots,4$) are all distinct and do not belong to $C$. As in the case of the 5-cycle, we let $G'=(G-V(C))+\{u_1u_2,u_3u_4\}$ (possibly creating parallel edges) and $U=\{u_1,\dots,u_4\}$. Consider an unfriendly partition $\{A',B'\}$ of $G'$ where $|A'\cap U|\le 2$. In each case, we can extend this to a nice partition $\{A,B\}$ of $V(G)$, where $A'\subset A$ and $B'\subset B$ and $|A\cap V(C)|\ge 2$. We are done if this partition is unbalanced. Otherwise, we consider the partition $\{A',B'\cup V(C)\}$. Similarly as in the case of the 5-cycle, we see that every component of $G(B)$, except the component $Q$ containing $C$, contains at most one edge. Observe that $Q$ is isomorphic to an induced subgraph of $C_4(2,2,2,2)$, and by Lemma \ref{lem:C521111}(a), $\l_2(Q)\le\sqrt{2}$ and $\l_2^-(Q)\ge -\sqrt{2}$. Since $|A'|\le |B'\cup V(C)|-4$, the eigenvalue interlacing (Theorem \ref{thm:interlacing}) shows that $R(G)\le\sqrt{2}$.

Suppose now that $G$ has an edge $uv$ that is contained in two triangles, $uvw$ and $uvz$. If $zw\in E(G)$, then $G=K_4$ and $R(G)=1$. Otherwise, let $z'$ be the neighbor of $z$ different from $u,v$, and let $w'$ be the neighbor of $w$ different from $u,v$. Let $G'$ be the graph obtained from $G-\{u,v,w,z\}$ by identifying $z'$ and $w'$. (The new vertex is of degree 4 unless $z'=w'$ already in $G$, or when $z'w'\in E(G)$, when we remove the resulting loop and obtain a vertex of degree 2.) Let us take an unfriendly partition of $G'$. Observe that this partition gives rise to a nice partition $\{A',B'\}$ of $G-\{u,v,w,z\}$. We may assume that $z',w'\in A'$. Then $\{A'\cup\{u,v\},B'\cup\{z,w\}\}$ is a nice partition of $G$. The vertex $u$ is unstable, so there is a nice unbalanced partition.

To summarize, we may henceforth assume that $G$ is a cubic graph in which each vertex belongs to a cycle of length at most 5, every induced 4-cycle or 5-cycle $C$ has two vertices with a common neighbor outside $C$, and no edge is contained in two triangles. The last condition implies that every 4-cycle is induced.

\begin{figure}[htb]
   \centering
   \includegraphics[width=6.7cm]{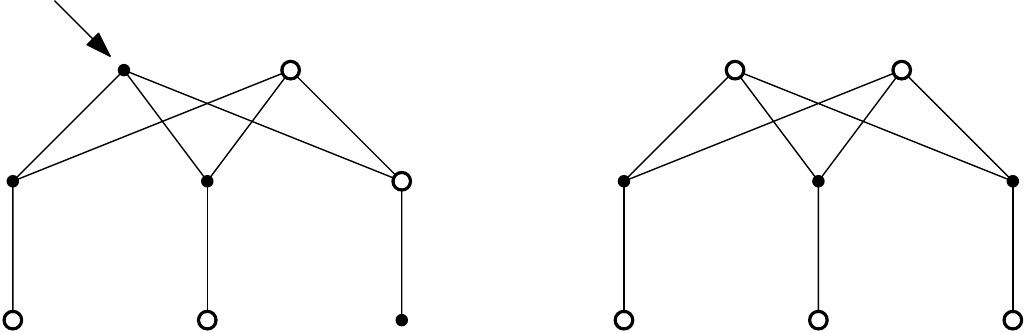}
   \caption{Extending unfriendly partition of $G'$ to $G$}
   \label{fig:4a}
\end{figure}

Suppose now that $C=v_1\dots v_4$ is an (induced) 4-cycle in $G$, and suppose that $u\notin V(C)$ is a common neighbor of $v_1$ and $v_3$. Let $a,b,c\notin \{v_1,v_3\}$ be the neighbors of $v_2,v_4,u$ (respectively). Since every 4-cycle is induced, none of $a,b,c$ are in $\{v_2,v_4,u\}$. If $a=b=c$, then $G$ is isomorphic to $K_{3,3}$, and $R(G)=0$. By symmetry we may assume that $a\neq b,c$. Let $G'$ be the graph obtained from $G$ by deleting $V(C)$ and $u$ and adding the edge $ab$. Let $\{A',B'\}$ be an unfriendly partition of $G'$, where $|A'\cap\{a,b,c\}|\le1$. This partition can be extended to a nice partition $\{A,B\}$ of $V(G)$ (i.e. $A'\subset A$, $B'\subset B$) as shown in Figure \ref{fig:4a}. The two cases shown are for the cases, where two or three of the vertices $a,b,c$ (counted twice if $b=c$) are in $B'$. The bigger white vertices represent $B$ and smaller black vertices represent the elements of $A$. In the first case in Figure \ref{fig:4a}, the vertex pointed to with an arrow is unstable, so we are done in that case. In the second case, we are done if this partition is unbalanced. Otherwise, we consider the partition $\{A',B'\cup V(C)\cup\{u\}\}$. Similarly as above, we see that every component of $G(B)$, except the component $Q$ containing $C$, has at most one edge. Observe that $Q$ is isomorphic to one of the graphs depicted in Figure \ref{fig:2}. By Lemma \ref{lem:K23}(a), $\l_2(Q)\le\sqrt{2}$ and $\l_2^-(Q)\ge -\sqrt{2}$. Since $|A'| = |B'\cup V(C)\cup\{u\}|-6$, eigenvalue interlacing (Theorem \ref{thm:interlacing}) shows that $R(G)\le\sqrt{2}$.

\begin{figure}[htb]
   \centering
   \includegraphics[width=6.5cm]{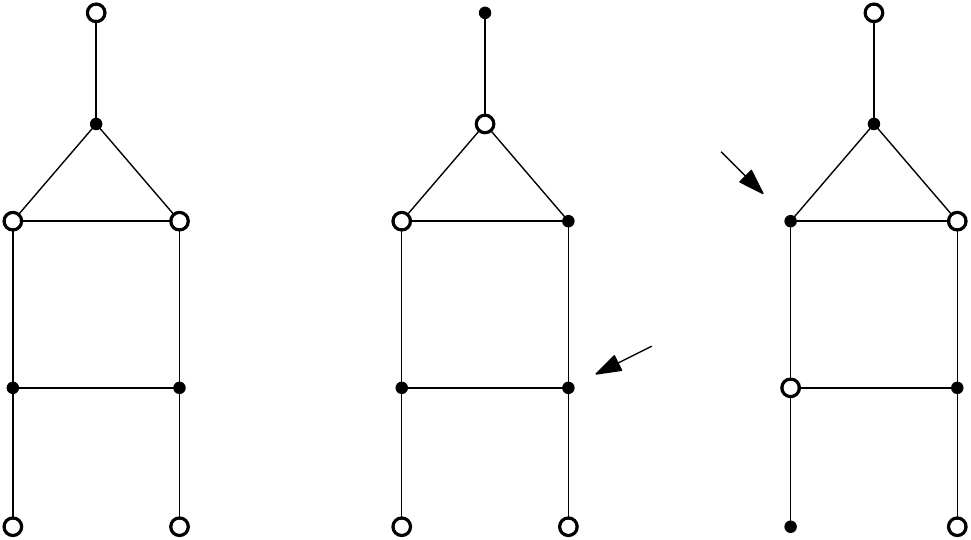}
   \caption{Extending unfriendly partition of $G'$ to $G$}
   \label{fig:4b}
\end{figure}

Let us now suppose that $u\notin V(C)$ is a common neighbor of $v_1$ and $v_2$ and let $a,b,c\notin V(C)\cup\{u\}$ be the neighbors of $v_3,v_4,u$ (respectively). If $a=b=c$, then $G$ is isomorphic to the prism $K_3\Box K_2$ that has $R(G)=0$. Otherwise, we may assume that $a\ne c$. As above, we let $G'$ be the graph obtained from $G-(V(C)\cup\{u\})$ by adding the edge $ab$ (if $a\ne b$) or $ac$ (if $a=b$). Starting with an unfriendly partition $\{A',B'\}$ of $G'$, where $|A'\cap\{a,b,c\}|\le1$, the partition can be extended to a nice partition $\{A,B\}$ of $V(G)$, where $A'\subset A$, $B'\subset B$ and $|A\cap (V(C)\cup\{u\})|=3$. See Figure \ref{fig:4b} for different possibilities. The last two cases in the figure have unstable vertices (indicated by arrows), thus we may assume that none of $a,b,c$ is in $A'$.
We are done if the partition is unbalanced. Otherwise, we consider the partition $\{A',B'\cup V(C)\cup\{u\}\}$. Similarly as above, we see that every component of $G(B)$, except the component $Q$ containing $C$, contains at most one edge. Observe that $Q$ is isomorphic to an induced subgraph of one of the graphs in Figure \ref{fig:3}. (If $a,b,c$ are distinct, we obtain an induced subgraph of the first case in the figure; if $a=b$, we obtain the second one; since we have added the edge $ac$ to $G'$, the edge $ac$ is not in $Q$; if $b=c$, then we obtain the third graph in Figure \ref{fig:3}.) By Lemma \ref{lem:K23}(b), $\l_3(Q)\le\sqrt{2}$ and $\l_3^-(Q)\ge -\sqrt{2}$. Since $|A|=|B|-6$, the eigenvalue interlacing theorem \ref{thm:interlacing} shows that $R(G)\le\sqrt{2}$.

From now on, we may assume that $G$ has no 4-cycles.

\begin{figure}[htb]
   \centering
   \includegraphics[width=9cm]{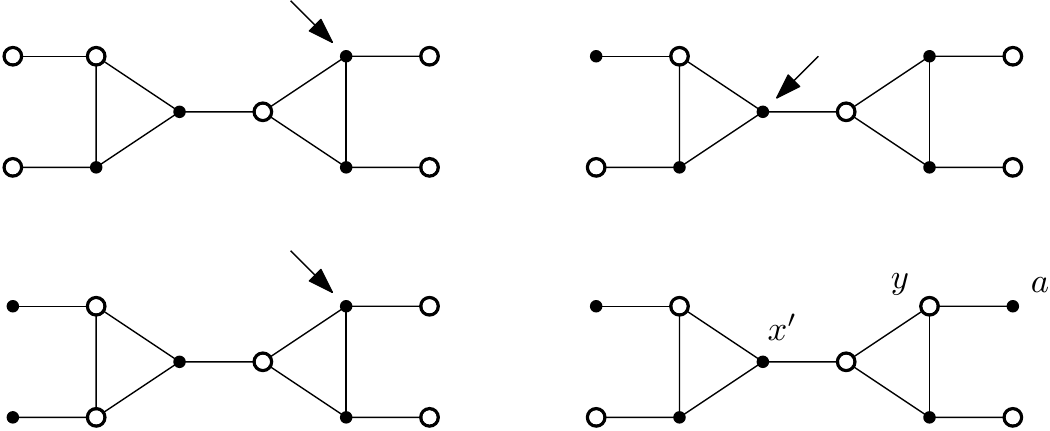}
   \caption{Extended unfriendly partition with an unstable vertex}
   \label{fig:5}
\end{figure}

Suppose that $G$ has a triangle $T=xyz$ and that the neighbor $x'\notin V(T)$ of $x$ also lies in a triangle, $T'=x'y'z'$. Excluding previously treated cases, the subgraph $K$ of $G$ induced on $T\cup T'$ consists of the two triangles together with the edge $xx'$. Let $a,b,c,d\notin V(K)$ be the respective neighbors of $y,z,y',z'$. Note that $a\ne b$ and $c\ne d$, but it may happen that $\{a,b\}\cap\{c,d\} \ne \emptyset$. Let $G'$ be the graph obtained from $G-V(K)$ by adding the edges $ab$ and $cd$. Let $\{A',B'\}$ be an unfriendly partition of $G'$, where $|A'\cap\{a,b,c,d\}|\le |B'\cap\{a,b,c,d\}|$. This partition can be extended to an unfriendly partition $\{A,B\}$ of $V(G)$ which has an unstable vertex. All possible cases (up to symmetries) are depicted in Figure \ref{fig:5}, where vertices in $A$ are shown by smaller full circles, vertices in $B$ by larger white circles. The arrows show unstable vertices in three of the four cases. In the last case, the vertex $y$ is unstable if $a$ has no neighbor in $A$. If $a$ has a neighbor in $A$, then we move $a$ from $A$ into $B$ and move $y$ from $B$ into $A$. This gives another nice partition, in which the vertex $x'$ is unstable.

\begin{figure}[htb]
   \centering
   \includegraphics[width=9cm]{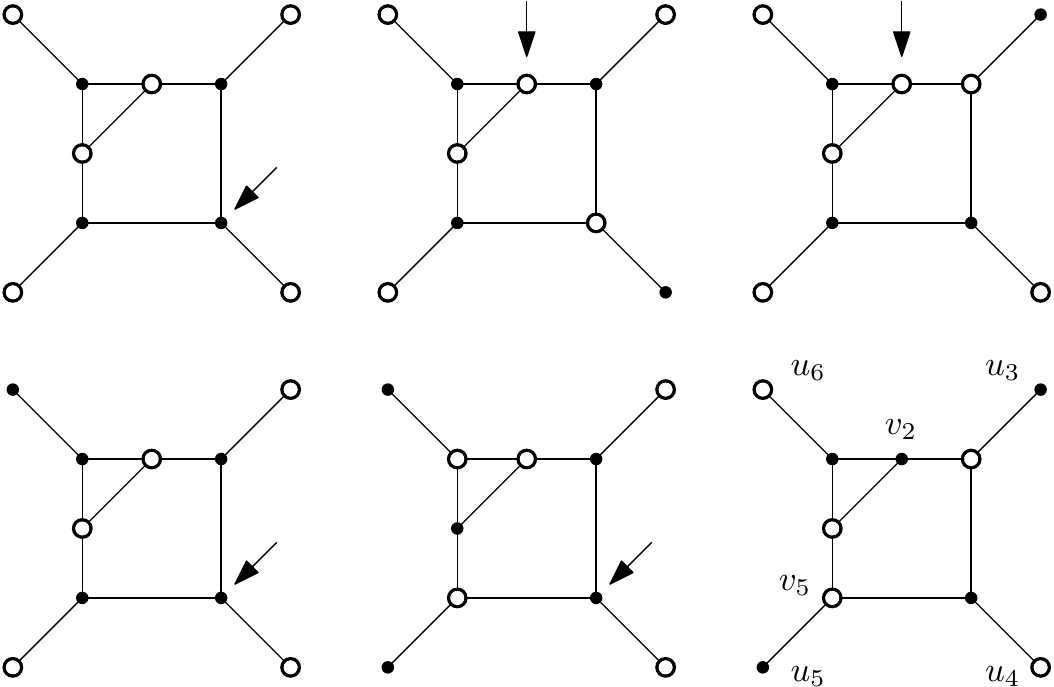}
   \caption{Extended nice partition with an unstable vertex}
   \label{fig:6}
\end{figure}

Finally, suppose that $C=v_1\dots v_5$ is a 5-cycle in $G$. Since $G$ has no 4-cycles, $C$ is induced. Let $u_i\notin V(C)$ be a neighbor of $v_i$ ($1\le i\le 5$). Two of these neighbors must coincide, and since $G$ has no 4-cycles, we may assume that $u_1=u_2$. Excluding the above case of adjacent triangles, we see that $u_3,u_4,u_5$ are all distinct. Let $u_6$ be the neighbor of $u_1$ that is not on $C$. Note that $u_6\notin\{u_3,u_5\}$ but possibly $u_6=u_4$. In this case, we let $G'=(G-(V(C)\cup\{u_1\}))+\{u_3u_4,u_6u_5\}$. We start with an unfriendly partition of $G'$ and, as above, extend it to a nice partition with an unstable vertex. See Figure \ref{fig:6} for details. This is possible in all cases except the one shown as the last one in the figure. Considering this remaining case, let us first assume that $u_4\ne u_6$. If the partition in that case is balanced, we add $V(C)\cup\{u_1\}$ into $B$. Since we have increased the cardinality of $B$ by three, it suffices to see that the component $Q$ of $G(B)$ containing $C$ satisfies $\l_3(Q)\le\sqrt{2}$ and $\l_3^-(Q)\ge -\sqrt{2}$. This now follows by Lemma \ref{lem:K23}(c) since $Q$ is isomorphic to an induced subgraph of one of the graphs shown in Figure~\ref{fig:4}. Suppose now that $u_4=u_6$. If $u_5$ has no neighbor in $A$, then $v_5$ is unstable. Otherwise, we move $u_5$ from $A$ into $B$ and move $v_5$ from $B$ into $A$. The new partition is easily seen to be nice. Since $u_3\ne u_5$, the vertex $v_3$ has all its neighbors in $A$, and therefore $v_2$ is unstable.

Now we may assume that $G$ has no 5-cycles. Since every vertex belongs to a cycle of length at most 5, we see that every vertex lies in a triangle, which gives two adjacent triangles. This case was treated before, and thus the proof is complete.
\end{proof}

\section{Large graphs have many small eigenvalues}
\label{sect:large}

The proof of Theorem \ref{thm:cubic} can be used to derive an even stronger conclusion, asserting that a positive fraction of the ``median'' eigenvalues lie between $-\sqrt{2}$ and $\sqrt{2}$.

\begin{theorem}
\label{thm:linearly many}
There is a constant $\delta>0$ such that for every subcubic graph $G$ of order $n$, all its eigenvalues $\l_i(G)$, where $(\tfrac{1}{2}-\delta)n\le i\le (\tfrac{1}{2}+\delta)n$, belong to the interval $[-\sqrt{2},\sqrt{2}]$.
\end{theorem}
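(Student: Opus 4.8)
The plan is to re-examine the proof of Theorem~\ref{thm:cubic} and extract a \emph{quantitative} version of its structure. That proof produces, for any subcubic graph $G$, an unbalanced nice partition $\{A,B\}$ with $|B|-|A|\ge c$ for some \emph{absolute} constant $c$ whenever $n$ is large enough, or else it directly exhibits a component of $G(B)$ that is one of finitely many exceptional graphs on which Theorem~\ref{thm:interlacing} forces the relevant eigenvalues into $[-\sqrt2,\sqrt2]$. In the present theorem we need the stronger statement that the \emph{gap} $|B|-|A|$ can be taken to be a positive fraction $2\delta n$ of $n$, rather than just a constant. So the first step is: starting from an unfriendly (hence nice) partition $\{A_0,B_0\}$ of $G$, repeatedly locate unstable vertices and flip them toward the smaller side. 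Each successful flip decreases $\big||A|-|B|\big|$ by $2$ only if it is on the minority side; the point is rather that the proof of Theorem~\ref{thm:cubic} shows that as long as the partition is \emph{balanced} and none of the exceptional local configurations occurs, there is an unstable vertex, and flipping it gives a new nice partition that is unbalanced. We want to iterate this to build up a large imbalance.

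The key technical step is therefore a ``flipping lemma'': if $\{A,B\}$ is a nice partition of a cubic graph $G$ with $|B|-|A|=2t$ and $t$ is small relative to $n$ (say $2t<\epsilon n$), then either some component of $G(B)$ is one of the finitely many graphs from Lemmas~\ref{lem:C521111} and~\ref{lem:K23} (from which the conclusion follows via interlacing exactly as in Theorem~\ref{thm:cubic}), or there is a vertex $v\in A$ that is unstable and can be moved into $B$ to produce a nice partition with imbalance $2t+2$. To prove this I would run the case analysis of Theorem~\ref{thm:cubic} essentially verbatim, but localized: pick any vertex $v\in A$ and look at the shortest cycle through it. In each case of that proof a local modification $G'=(G-S)+(\text{new edges})$ is formed for a small set $S$, an unfriendly partition of $G'$ is taken, and it is extended to a nice partition of $G$ in which the minority side grows; crucially the argument there only ever uses \emph{local} structure around $S$, never the global balance of the partition, so it applies here with $\{A,B\}$ in place of the initial unfriendly partition — provided we first restore a partition whose restriction to $G-S$ is unfriendly relative to $G'$. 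This last point is the only subtlety: unfriendliness of the extended partition is needed in several cases, but we may simply re-run Lemma~\ref{lem:unfriendly partition} \emph{within} $G'$ (which does not touch the running imbalance, since $|S|$ is fixed and we re-add $S$ to the minority side), so after each round we still have a nice partition with imbalance increased by at least a fixed positive amount, or we hit an exceptional component and stop.

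Iterating the flipping lemma, after $k$ rounds we have a nice partition $\{A,B\}$ with $|B|-|A|\ge 2k$ (the ``$+2$'' per round can be replaced by ``$\ge$ some fixed positive constant per round''; even $+2$ suffices), and the process can be continued as long as $|B|-|A|<\epsilon n$ for the $\epsilon$ coming from the flipping lemma's hypothesis. Hence either the process terminates because we reached an exceptional component — in which case Theorem~\ref{thm:interlacing} gives that $\l_i(G)\in[-\sqrt2,\sqrt2]$ for all $i$ in a window of width $\Omega(n)$ around the median (here we use interlacing not just for $i=H$ but for the whole range $|A|+1\le i+|A|$, exploiting that $G(B)=G-A$ is a disjoint union of paths of length $\le2$ \emph{plus one bounded-size exceptional component}, so $\l_{j}(G(B))\le\sqrt2$ for all $j\ge 3$ and $\l^-_{j}(G(B))\ge-\sqrt2$ for all $j\ge3$) — or the process runs until $|B|-|A|\ge \epsilon n$, in which case $G(B)$ is a disjoint union of paths of length at most $2$, so $\l_j(G(B))\in[-\sqrt2,\sqrt2]$ for \emph{every} $j$, and interlacing (Theorem~\ref{thm:interlacing} with $|A|=\tfrac{n-\epsilon n}{2}$ deleted vertices) gives $\l_i(G)\le\l_{i-|A|}(G(B))\le\sqrt2$ for all $i>|A|$ and symmetrically $\l_i(G)\ge-\sqrt2$ for all $i\le |B|$, i.e.\ for all $i$ in the range $|A|<i\le n-|A|$, which contains $(\tfrac12-\delta)n\le i\le(\tfrac12+\delta)n$ with $\delta=\epsilon/2$. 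In both cases one obtains the window $[(\tfrac12-\delta)n,(\tfrac12+\delta)n]$ for a fixed $\delta>0$ depending only on the constants in the flipping lemma. The main obstacle I anticipate is bookkeeping: making the flipping lemma genuinely uniform, i.e.\ checking that \emph{every} branch of the Theorem~\ref{thm:cubic} case analysis either increases the imbalance or exposes an exceptional component, and that ``small vertex set removed'' stays bounded so that the per-round gain is a fixed constant and the exceptional components stay in the finite list covered by Lemmas~\ref{lem:C521111}--\ref{lem:K23}. Once that uniformity is in hand, the interlacing step and the extraction of $\delta$ are routine.
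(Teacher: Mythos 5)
Your overall strategy---accumulate a linear imbalance $|B|-|A|=\Omega(n)$ in a nice partition and then apply Theorem~\ref{thm:interlacing} across a whole window of indices rather than just at $H$ and $L$---is the right idea, and your endgame (the interlacing step, including the handling of one bounded exceptional component via $\l_j$ for $j\ge 3$) is sound. But the engine you propose for building the imbalance, the iterative ``flipping lemma,'' has a genuine gap. The case analysis in the proof of Theorem~\ref{thm:cubic} does \emph{not} show that an arbitrary nice partition with small imbalance admits an unstable vertex on the minority side. It shows something much weaker: that a \emph{specific} partition, constructed from scratch by taking an unfriendly partition of a locally modified multigraph $G'$ and extending it over the deleted set, has \emph{some} unstable vertex (with no control over which side it lies on; flipping it merely makes a balanced partition unbalanced by $2$). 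Your proposed fix---``re-run Lemma~\ref{lem:unfriendly partition} within $G'$'' to restore unfriendliness before each round---destroys the accumulated imbalance: Lemma~\ref{lem:unfriendly partition} produces an unfriendly partition by a global max-cut argument and bears no quantitative relation to the partition you had before, so after re-running it you are back to imbalance $O(1)$, not $2t+2$. Without a statement of the form ``every nice partition with imbalance $<\epsilon n$ and no exceptional component has an unstable vertex on the smaller side'' (which the paper never proves and which looks delicate at best), the iteration does not get off the ground.

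The paper avoids iteration entirely by parallelizing. It selects a maximal set of vertices $v_1,\dots,v_t$ pairwise at distance at least $11$, so $t\ge n/3070$; around each $v_i$ it performs the local surgery from the appropriate case a)--e) of the Theorem~\ref{thm:cubic} proof, all simultaneously on a single multigraph $G'$ (the distance condition guarantees the surgeries do not interfere). One unfriendly partition of $G'$ is then extended to a nice partition of $G$, and each of the $t$ sites independently offers a choice of which part to enlarge by $k_i\in\{1,2,3\}$ (by flipping or not flipping its unstable vertex, or by the placement in case (iii)). By pigeonhole at least $t/2$ sites can be made to enlarge the same part, yielding imbalance $\Omega(t)=\Omega(n)$ in one shot, after which interlacing gives the window with $\delta=1/6140$. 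If you want to salvage your write-up, replacing the sequential flipping lemma with this simultaneous, distance-separated construction is the missing idea.
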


\begin{proof}
We shall prove the theorem with $\delta=1/6140$, with no intention to improve the constant.

Let $G$ be a subcubic graph and $v\in V(G)$. Then one of the following holds:
\begin{itemize}
  \item[a)]
  There is a vertex $u$ at distance at most 3 from $v$ such that $\deg(u)\le 2$.
  \item[b)]
  $v$ does not belong to a cycle of length $5$ or less.
  \item[c)]
  $v$ lies on an induced cycle of length 4 or 5 whose vertices have distinct neighbors out of the cycle.
  \item[d)]
  $v$ or one of its neighbors lies on a 3-cycle which either shares an edge with another 3-cycle or one of its vertices is adjacent to another 3-cycle.
  \item[e)]
  $v$ lies on a 4-cycle or a 5-cycle with an adjacent triangle.
\end{itemize}
In each of these cases, when previous cases were excluded for the vertex $v$, we found a set $D$ of vertices, all at distance at most 4 from $v$, and a subset $C\subseteq D$, and have defined a multigraph $G'$ by deleting vertices in $C$ and adding some edges between the vertices in $D\setminus C$. After that we proved that, starting with an unfriendly partition of $G'$, we can extend it to a nice partition $\{A,B\}$ of $G$ such that one of the following holds:
\begin{itemize}
  \item[(i)]
  $\{A,B\}$ has an unstable vertex in $D$, or
  \item[(ii)]
  $\{A,B\}$ can be changed by exchanging parts for some vertices in $D$ such that some vertex in $D$ becomes unstable, or
  \item[(iii)]
  $\{A,B\}$ can be changed to a partition $\{\hat A,\hat B\}$, where either $\hat A\subseteq A$, $\hat B \subseteq B\cup D$, $|\hat B|=|B|+2k$ (where $k\in \{2,3\}$), and all components of $G(\hat B)$ have at most two edges except for the component $Q$ containing vertices in $C$, and $Q$ satisfies $\l_k(Q)\le \sqrt{2}$ and $\l_k^-(Q)\ge -\sqrt{2}$; or the same properties hold with the roles of $\hat A$ and $\hat B$ interchanged.
\end{itemize}

Now, let us start with a maximal collection of vertices $v_1,\dots,v_t\in V(G)$ such that any two of them are at distance at least 11 from each other. Since every vertex $v$ in a subcubic graph has at most $1+3+3\cdot 2+3\cdot 2^2+\cdots +3\cdot 2^9 = 3070$ vertices at distance 10 or less, we have that $t\ge n/3070$.

For each vertex $v_i$ ($1\le i\le t$) we consider the corresponding subcase a)--e) and let $C_i\subseteq D_i$, $Q_i$ and $k_i\in \{2,3\}$ (in case (iii)) be the corresponding quantities from the above description. We define $k_i=1$ in cases (i) and (ii). Now we form the multigraph $G'$ by removing all subsets $C_i$ ($1\le i\le t$) and adding edges to each $D_i\setminus C_i$ as required in each case. The distance condition implies that these changes do not interfere with each other, neither when defining $G'$, nor when producing the nice partition $\{A,B\}$ or $\{\hat A,\hat B\}$ satisfying (i)--(iii) simultaneously for each case.

Each change in (i)--(iii) increases either $|A|$ or $|B|$ by $k_i$, where $k_i\in \{1,2,3\}$. Let $a$ be the number of cases where $|A|$ increases, and let $b=t-a$ be the number of cases where $|B|$ increases. Then either $|A|+a\ge (|B|-a)+t$ or $|B|+b\ge (|A|-b)+t$. Whichever case holds, we can get a partition of $V(G)$ to which the eigenvalue interlacing theorem can be applied to show that $t/2$ eigenvalues preceding $\l_H(G)$ and $t/2$ eigenvalues succeeding $\l_L(G)$ all belong to the interval $[-\sqrt{2},\sqrt{2}]$.
\end{proof}

Note that $\sqrt{2}$ in Theorem \ref{thm:linearly many} cannot be replaced by any smaller number as shown by the union of many copies of the Fano plane incidence graph.

As for a ``rough converse'' of Theorem \ref{thm:linearly many}, let us observe that every subcubic graph $G$ of order $n$, none of whose components is a path of length at most 3, has at least $n/30$ eigenvalues that are larger or equal to $\sqrt{3}$. To see this, observe that $G$ has an induced subgraph consisting of $t\ge n/30$ copies of $K_{1,3}$, $K_3$, and $P_5$. Since the spectral radius of each of these components is at least $\sqrt{3}$, eigenvalue interlacing implies the above stated fact. The following theorem shows that the constant $\sqrt{3}$ can be improved to $2-\varepsilon$ if $G$ has minimum degree at least 2, and to $2\sqrt{2}-\varepsilon$ if $G$ is cubic.

\begin{theorem}
\label{thm:linearly many large}
Let $\Delta\ge\delta\ge2$ be integers and let $\varepsilon>0$ be a real number. Then there is a constant $c=c(\Delta,\varepsilon)$ such that every graph of order $n$, of maximum degree at most $\Delta$ and minimum degree at least $\delta$ has at least $\lceil n/c\rceil$ eigenvalues that are larger than $2\sqrt{\delta-1}-\varepsilon$.
\end{theorem}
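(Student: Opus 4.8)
The plan is to follow the template of the proof of Theorem~\ref{thm:linearly many}: locate $t\ge\lceil n/c\rceil$ pairwise far-apart vertices, show that the induced subgraph on the union of small balls around them splits into $t$ pieces each of spectral radius exceeding $2\sqrt{\delta-1}-\varepsilon$, and then use eigenvalue interlacing (Theorem~\ref{thm:interlacing}) to push $t$ such eigenvalues up to $G$. Here $2\sqrt{\delta-1}$ should be recognised as the spectral radius of the infinite $\delta$-regular tree, and the whole point is a \emph{local} estimate: balls of a fixed radius in a graph of minimum degree $\ge\delta$ already behave, spectrally, almost like truncated $\delta$-regular trees. To fix the radius, let $C_m=\frac{1}{m+1}\binom{2m}{m}$ be the $m$th Catalan number; since $C_m^{1/2m}\to2$, choose $m=m(\Delta,\varepsilon)$ so large that $C_m^{1/2m}\sqrt{\Delta-1}>2\sqrt{\Delta-1}-\varepsilon$, which (because $2\le\delta\le\Delta$ and $x\mapsto\varepsilon/\sqrt{x-1}$ decreases) also gives $C_m^{1/2m}\sqrt{\delta-1}>2\sqrt{\delta-1}-\varepsilon$. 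Put $c=1+\Delta+\cdots+\Delta^{2m+1}$, which depends only on $\Delta$ and $\varepsilon$; in any graph of maximum degree $\le\Delta$ every ball of radius $2m+1$ has at most $c$ vertices.

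The crux is the claim that if $\delta(G)\ge\delta$ then for every $v\in V(G)$ the number of closed walks of length $2m$ from $v$ to $v$ satisfies $(A^{2m})_{vv}\ge C_m(\delta-1)^m$. I would prove this by fixing a total order on $V(G)$ and injecting, into the set of such closed walks, the set of pairs consisting of a lattice word $w\in\{D,U\}^{2m}$ with $m$ letters of each kind and every prefix containing at least as many $D$'s as $U$'s (there are $C_m$ of these), together with a colouring of the $m$ occurrences of $D$ by $\{1,\dots,\delta-1\}$. Reading $w$ from left to right, maintain a current vertex and a stack $g_0=v,g_1,g_2,\dots$: at a $D$ of colour $j$, move from the current vertex $g_h$ to the $j$th neighbour of $g_h$ (in the fixed order) other than $g_{h-1}$ — there are at least $\deg(g_h)-1\ge\delta-1\ge j$ of these, and at least $\delta>j$ when $h=0$ — and push it; at a $U$, move from $g_h$ back to $g_{h-1}$ and pop. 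Since $w$ returns to height $0$ the resulting walk is closed; and two coloured words that first differ at some position run identically up to there and then move to different vertices (a $U$ returns to $g_{h-1}$, a $D$ deliberately avoids it, distinct colours pick distinct neighbours), so the map is injective.

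Now every closed $2m$-walk from $v$ stays inside the ball $B_m(v)$, so the same lower bound holds with $G$ replaced by $H_v:=G[B_m(v)]$; and since $(A_{H_v}^{2m})_{vv}=\sum_j\mu_j^{2m}\langle e_v,\phi_j\rangle^2\le\lambda_1(H_v)^{2m}$ (with $\mu_j,\phi_j$ the eigenpairs of $A_{H_v}$, using $\sum_j\langle e_v,\phi_j\rangle^2=1$ and the fact that $\lambda_1(H_v)$ is the spectral radius of the nonnegative matrix $A_{H_v}$), we get $\lambda_1(H_v)\ge\bigl(C_m(\delta-1)^m\bigr)^{1/2m}=C_m^{1/2m}\sqrt{\delta-1}>2\sqrt{\delta-1}-\varepsilon$. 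To finish, take a maximal set $v_1,\dots,v_t$ of vertices pairwise at distance $\ge2m+2$; maximality forces the balls of radius $2m+1$ about them to cover $V(G)$, so $tc\ge n$ and $t\ge\lceil n/c\rceil$. The distance condition makes the balls $B_m(v_i)$ pairwise disjoint with no edges between them, so $H:=G\bigl[\bigcup_iB_m(v_i)\bigr]$ is the disjoint union of the $H_{v_i}$ and is an induced subgraph of $G$. Each $H_{v_i}$ contributes its own top eigenvalue, which exceeds $2\sqrt{\delta-1}-\varepsilon$, so $\lambda_i(H)>2\sqrt{\delta-1}-\varepsilon$ for $i=1,\dots,t$; since $H=G-A$ with $A=V(G)\setminus\bigcup_iB_m(v_i)$, Theorem~\ref{thm:interlacing} gives $\lambda_i(G)\ge\lambda_i(H)>2\sqrt{\delta-1}-\varepsilon$ for $i=1,\dots,t$. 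Hence $G$ has at least $t\ge\lceil n/c\rceil$ eigenvalues exceeding $2\sqrt{\delta-1}-\varepsilon$.

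The only genuinely delicate step is the walk-counting inequality $(A^{2m})_{vv}\ge C_m(\delta-1)^m$ for graphs of minimum degree $\ge\delta$; everything around it (choice of $m$, greedy packing of far-apart balls, and interlacing) is routine once it is in place. An alternative is to quote that the spectral radius of the infinite $\delta$-regular tree equals $2\sqrt{\delta-1}$ and that root-based closed-walk counts of minimum-degree-$\ge\delta$ graphs dominate those of the tree, but the Catalan-word injection keeps the argument self-contained and also makes the dependence $c=c(\Delta,\varepsilon)$ completely explicit.
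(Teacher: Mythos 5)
Your proof is correct, and its outer skeleton is exactly the paper's: pick a maximal set of pairwise far-apart vertices (so the small balls around them are disjoint, non-adjacent, and at least $n/c$ in number), lower-bound the spectral radius of each ball, and transfer these $t$ eigenvalues to $G$ by interlacing applied to the induced union of the balls. Where you genuinely diverge is in the key local estimate. The paper simply quotes its reference [Mo] for the quantitative bound $\l_1(B_r(v)) > 2\sqrt{\delta-1}\,(1-\pi^2/r^2+O(r^{-3}))$ (a strengthening of the Alon--Boppana--Serre theorem) and chooses $r$ accordingly. You instead prove a self-contained, weaker but fully sufficient version: the Dyck-word/Catalan injection showing $(A^{2m})_{vv}\ge C_m(\delta-1)^m$ for minimum degree $\ge\delta$, combined with $(A_{H_v}^{2m})_{vv}\le \l_1(H_v)^{2m}$, gives $\l_1(B_m(v))\ge C_m^{1/2m}\sqrt{\delta-1}\to 2\sqrt{\delta-1}$. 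I checked the delicate points of the injection --- the prefix condition guarantees you never pop at height $0$, the exclusion of $g_{h-1}$ leaves at least $\delta-1$ choices, and the first-difference argument does yield distinct vertex sequences --- and the choice of $m$ via the monotonicity of $\varepsilon/\sqrt{x-1}$ correctly makes $c$ depend only on $\Delta$ and $\varepsilon$. What your route buys is self-containment and a completely explicit constant $c=1+\Delta+\cdots+\Delta^{2m+1}$; what the paper's route buys is brevity and a sharper radius (polynomial rather than exponential rate of convergence in $r$, hence a much smaller $c$ for a given $\varepsilon$). Both are valid proofs of the stated theorem.
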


\begin{proof}
Let $G$ be a graph of minimum degree at least $\delta$ and let $v\in V(G)$. Let $B_r(v)$ (the \DEF{$r$-ball} around $v$) be the induced subgraph of $G$ on all vertices whose distance from $v$ is at most $r$. It is shown in \cite{Mo} that
$$
    \l_1(B_r(v)) > 2\sqrt{\delta-1}\,\Bigl(1 - \frac{\pi^2}{r^2} + O(r^{-3})\Bigr).
$$
We take $r$ large enough so that the right hand side quantity in the above inequality is at least $2\sqrt{\delta-1}-\varepsilon$. By taking a maximum subset $U$ of vertices that are at distance at least $r+2$ from each other, we see that their $r$-balls are disjoint and non-adjacent, and the interlacing theorem shows that $G$ has at least $|U|$ eigenvalues that are larger than $2\sqrt{\delta-1}-\varepsilon$.
\end{proof}

Cycles show that $2-\varepsilon$ cannot be improved in the subcubic case (with minimum degree 2), and Ramanujan graphs \cite{LPS,Mar} show that $2\sqrt{2}-\varepsilon$ cannot be increased for cubic graphs.

A version of Theorem \ref{thm:linearly many large} for negative eigenvalues does not hold since there are $d$-regular graphs of arbitrarily large degree and diameter whose smallest eigenvalue is $-2$ (for example, all line graphs have this property).

We refer to \cite{Mo} for some related results.

\section{Open problems}

We are not aware of any planar subcubic graph with $R(G)>1$ and have some evidence for the following speculation.

\begin{conjecture}
If\/ $G$ is a planar subcubic graph, then $R(G)\le1$.
\end{conjecture}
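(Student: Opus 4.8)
The plan is to follow the scheme of the proof of Theorem~\ref{thm:cubic}, but with the threshold $\sqrt2$ replaced by $1$ throughout. Since $\l_1(K_2)=1$ and a disjoint union of copies of $K_1$ and $K_2$ has all of its eigenvalues in $\{-1,0,1\}$, the interlacing argument behind Lemma~\ref{lem:nice partition} shows $R(G)\le1$ as soon as $G$ has an \emph{unbalanced} partition $\{A,B\}$ with $|B|>|A|$ for which $G(B)$ is a disjoint union of isolated vertices and single edges; equivalently, as soon as $G$ has a set $B$ with $|B|>\tfrac12|V(G)|$ inducing a subgraph of maximum degree at most $1$. Call such a partition \emph{great}. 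As in the proof of Theorem~\ref{thm:cubic}, it would in fact be enough to produce an unbalanced partition in which $G(B)$ is of this form except for a single component $Q$ that is a small graph satisfying $\l_k(Q)\le1$ and $\l^-_k(Q)\ge-1$, where $k$ is determined by the slack $|B|-|A|$.

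First I would run the reductions from the proof of Theorem~\ref{thm:cubic}: vertices of degree at most $2$, cut vertices, and small edge cuts all permit either a local flip or a reduction to smaller instances, and each of these operations preserves planarity. This reduces the problem to a $3$-connected cubic planar graph $G$, which (by Euler's formula, exactly as in the proof of Theorem~\ref{thm:cubic}) has girth at most $5$. Being planar, $G$ should then, via a discharging argument on its embedding, be shown to contain one of a bounded list of \emph{reducible configurations} --- a face of bounded size together with its immediate surroundings. For each configuration I would excise a bounded vertex set $C$ (together with a slightly larger set $D\supseteq C$), start from an unfriendly (maximum-cut) partition of $G-C$ with suitable edges added on $D\setminus C$, add $C$ back to obtain a nice partition of $G$, and argue --- as in the long case analysis in the proof of Theorem~\ref{thm:cubic} --- that either some vertex of $D$ becomes unstable and flipping it yields a great partition, or the exceptional component $Q$ of $G(B)$ is one of finitely many small planar graphs, each checked by computer to satisfy $\l_k(Q)\le1$ and $\l^-_k(Q)\ge-1$. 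The finitely many planar subcubic graphs admitting no great partition at all --- $K_4$, the prism $K_3\Box K_2$, and a short list of further ``dissociation-tight'' graphs --- would be disposed of by direct computation of their spectra.

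The main obstacle is that, unlike in Theorem~\ref{thm:cubic}, there is no \emph{automatic} matching partition playing the role of the implication ``unfriendly $\Rightarrow$ nice''. In fact there are arbitrarily large planar cubic graphs --- disjoint unions of copies of $K_4$, or the truncated tetrahedron (which, incidentally, already attains $R(G)=1$, so the bound in the conjecture would be best possible) --- whose dissociation number equals exactly $\tfrac12|V(G)|$; for these no great partition exists, so one is forced to lean on the exceptional-component trick with $Q$ a graph whose relevant eigenvalue equals $1$ on the nose. It is far from clear that the exceptional components thrown up by the reducible configurations of a cubic planar graph always have $\l_k$ and $\l^-_k$ confined to $[-1,1]$, and pinning this down (or finding the right enlargement of $C$ so that it holds) is exactly the gap that keeps the statement a conjecture. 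A secondary, more technical difficulty is that the discharging needed to enumerate the reducible configurations --- keeping them small enough that the resulting graphs $Q$ stay manageable --- is considerably heavier than the ad hoc analysis in the proof of Theorem~\ref{thm:cubic}, and may well require feeding in deeper structural input about planar cubic graphs, such as $3$-edge-colorability of the bridgeless ones (a consequence of the Four Color Theorem).
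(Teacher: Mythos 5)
The statement you are asked to prove is stated in the paper as an open \emph{conjecture}; the paper contains no proof of it, so there is nothing to compare your argument against, and the only question is whether your proposal actually closes the problem. It does not, and to your credit you say so yourself. What you have written is a program modelled on the proof of Theorem~\ref{thm:cubic}, together with a correct identification of the step that breaks: with the threshold lowered from $\sqrt2$ to $1$, the exceptional component $Q$ of $G(B)$ may no longer contain even a path of length $2$ (since $\l_1(P_3)=\sqrt2>1$), so the entire mechanism by which the paper converts a balanced unfriendly partition into a usable unbalanced one --- absorbing the excised cycle into $B$ and paying for the imbalance with a small component whose $k$th eigenvalues still clear the threshold --- loses all of its slack. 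Equivalently, the method now requires an induced subgraph of maximum degree at most $1$ on more than half the vertices (plus a tightly controlled exception), and as you note there are arbitrarily large planar cubic graphs (disjoint unions of $K_4$'s, or of truncated tetrahedra, the latter attaining $R(G)=1$) for which no such set exists. Your discharging/reducible-configuration outline is a reasonable shape for an eventual proof, but the key lemma --- that every sufficiently connected cubic planar graph contains a configuration whose excision produces an exceptional component $Q$ with $\l_k(Q)\le1$ and $\l_k^-(Q)\ge-1$ --- is precisely what is missing, and no candidate list of such configurations is exhibited or verified. So the conjecture remains open after your proposal; what you have contributed is a clear account of why the paper's technique does not extend verbatim, not a proof.
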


The following open problems remain unanswered:

\begin{enumerate}
\item[(1)]
It remains to see if $\sqrt{d}$ or $\sqrt{d-1}$ is the correct value for $R(d)$.
It could be either one of these two, or some value strictly between them. However, we believe that the lower bound is the correct value.
\item[(2)]
Is $R(d)=\widehat R(d)$ for every integer $d\ge0$?
\item[(3)]
If $G$ is a subcubic graph of odd order, what is the maximum value of $|\l_{H-1}|$ and $|\l_{H+1}|$? Examples of $C_3$ and $C_5$ show that $|\l_{H-1}|$ can be as large as 2 and $\l_{H+1}$ can be as small as $-2\cos\tfrac{4\pi}{5} \approx -1.618$. However, Theorem \ref{thm:linearly many} implies that there are only finitely many examples for which either $|\l_{H-1}|>\sqrt{2}$ or $|\l_{H+1}|>\sqrt{2}$.
\item[(4)]
The proof of Theorem \ref{thm:average degree} in Section \ref{sect:2} suggests that extremal graphs (e.g. those having maximum HL-index among $d$-regular graphs) must be close to be strongly regular. A similar property holds for graphs with maximum energy as shown in \cite{Ha,KoMo}. However, it turns out that the graphs that are energy-extremal are not extremal for the HL-index. It remains an open problem to determine extremal examples for the HL-index.
\end{enumerate}

%=====================================================================

%\bibliographystyle{amsplain}

\end{document}